\pdfoutput=1

\RequirePackage[l2tabu, orthodox]{nag}

\documentclass[12pt]{amsart}
\usepackage{fullpage,url,amssymb,enumerate,colonequals}
\usepackage[all]{xy} 
\usepackage{mathrsfs} 
\usepackage[dvipsnames]{xcolor}
\usepackage{tikz-cd}
\usepackage{breqn}
\usepackage[normalem]{ulem} 
\usepackage{textcomp}

\usepackage[left=3.1cm, right=3.1cm, top=3cm]{geometry}

\usepackage[OT2,T1]{fontenc}
\usepackage[utf8]{inputenc}
\DeclareSymbolFont{cyrletters}{OT2}{wncyr}{m}{n}
\DeclareMathSymbol{\Sha}{\mathalpha}{cyrletters}{"58}

\numberwithin{equation}{section}

\theoremstyle{definition}
\newtheorem{theorem}{Theorem}[section]
\newtheorem{lemma}[theorem]{Lemma}

\newtheorem*{corollary*}{Corollary}
\newtheorem*{assumption*}{Assumption}
\newtheorem{proposition}[theorem]{Proposition}

\theoremstyle{definition}
\newtheorem{definition}[theorem]{Definition}

\newtheorem*{conjecture*}{Conjecture}
\newtheorem*{theorem*}{Theorem}

\newtheorem*{mainthm*}{Main Theorem}

\theoremstyle{remark}
\newtheorem{remark}[theorem]{Remark}
\newtheorem*{remark*}{Remark}

\makeatletter
\g@addto@macro\bfseries{\boldmath} 
\makeatother

\usepackage{microtype}

\usepackage{hyperref}


\newcommand{\C}{\mathbb{C}} 
\newcommand{\F}{\mathbb{F}}
\newcommand{\G}{\mathbb{G}}

\newcommand{\Q}{\mathbb{Q}}
\newcommand{\R}{\mathbb{R}}

\newcommand{\Z}{\mathbb{Z}}

\newcommand{\xp}{\mathfrak{p}}




\DeclareMathOperator{\divv}{div}

\DeclareMathOperator{\Ext}{Ext}

\DeclareMathOperator{\Gr}{Gr}
\DeclareMathOperator{\Hom}{Hom}

\DeclareMathOperator{\im}{Im}

\DeclareMathOperator{\Jac}{Jac}

\DeclareMathOperator{\Res}{Res}


\DeclareMathOperator{\Nm}{Nm}

\usepackage{mathalfa}
\newcommand{\HHom}{\operatorname{\text{\usefont{U}{BOONDOX-cal}{m}{n}Hom}}} 



\renewcommand{\H}{{\operatorname{H}}}

\newcommand{\dd}{\mathrm{d}}




\usepackage{mathtools}

\newcommand{\norm}[1]{\lVert #1 \rVert}
\newcommand{\abs}[1]{\lvert #1 \rvert}
\newcommand{\inv}{^{-1}}

\newcommand{\ii}{{\texttt i}}

\newcommand{\tpi}{2\pi{\texttt i}}
\newcommand{\tpip}{(2\pi{\texttt i})}

\DeclareMathOperator{\res}{res}
\DeclareMathOperator{\id}{id}

\DeclareMathOperator{\val}{val}

\DeclareMathOperator{\MHS}{MHS}
\DeclareMathOperator{\hgt}{ht}

\DeclareMathOperator{\obst}{obst}

\newcommand{\LMHS}{L}

\newcommand{\cb}{\mathcal{B}}

\newcommand{\cc}{\mathcal{C}}
\newcommand{\cd}{\mathcal{D}}
\newcommand{\ce}{\mathcal{E}}

\newcommand{\co}{\mathcal{O}}

\newcommand{\toi}{\hookrightarrow}
\newcommand{\isoto}{\overset{\sim}{\to}}

\newcommand{\coleq}{\colonequals}

\newcommand{\hatH}{\widehat{H}}

\renewcommand{\Re}{\operatorname{Re}}
\renewcommand{\Im}{\operatorname{Im}}


\title{Heights on curves and limits of Hodge structures}

\author[S.~Bloch]{Spencer Bloch}
\address{Spencer Bloch, University of Chicago, 5801 S Ellis Ave, Chicago, IL 60637, United States}
\email{bloch@math.uchicago.edu }

\author[R.~de~Jong]{Robin de Jong}
\address{Robin de Jong,  Universiteit Leiden, Niels Bohrweg 1, 2333 CA Leiden, The Netherlands}
\email{rdejong@math.leidenuniv.nl}

\author[E.~C.~Sert\"oz]{Emre Can Sert\"oz}
\address{Emre Can Sert\"oz, Institut für Algebraische Geometrie, Leibniz Universit\"at Hannover, 
Welfengarten 1, 30167 Hannover, Germany}
\email{emre@sertoz.com}

\date{\today}

\subjclass[2010]{\href{https://mathscinet.ams.org/msc/msc2010.html?t=11G50}{11G50},
\href{https://mathscinet.ams.org/msc/msc2010.html?t=14D06}{14D06},
\href{https://mathscinet.ams.org/msc/msc2010.html?t=14D07}{14D07},
\href{https://mathscinet.ams.org/msc/msc2010.html?t=14G20}{14G20},
\href{https://mathscinet.ams.org/msc/msc2010.html?t=14H15}{14H15}}

\keywords{Biextension, canonical height, limit mixed Hodge structure, N\'eron pairing, period}

\begin{document}
\begin{abstract} 
  We exhibit a precise connection between  N\'eron--Tate heights on smooth curves and biextension heights of limit mixed Hodge structures associated to smoothing deformations of singular quotient curves. Our approach suggests a new way to compute Beilinson--Bloch heights in higher dimensions.
\end{abstract}
$ $\vspace{-1cm}
\maketitle

\section{Introduction}

Let $X_0$ be an odd-dimensional geometrically integral projective hypersurface of degree at least three defined over a number field. Assume that $X_0$ contains a single nodal singularity. The first named author has recently made the following conjecture.
\begin{conjecture*}
The following two real numbers coincide, up to an element of $\Q \cdot \log |\Q^\times|$: 
\begin{enumerate}[(i)]
  \item the Beilinson--Bloch height~\cite{Bei1987, Bloch1984, Zhang2020} of the difference of the two different rulings on the exceptional quadric of the blow-up of $X_0$ in the node; 
  \item the biextension height~\cite{Hain1990} of the limit mixed Hodge structure determined by any smoothing deformation of $X_0$. \label{item:intro_biext_height}
\end{enumerate}
\end{conjecture*}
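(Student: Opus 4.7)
The plan is to decompose both heights into local contributions at each place of the number field and compare them place by place. First I would choose a one-parameter smoothing $\pi\colon\calX\to\Delta$ of $X_0$ and pass to the semistable model $\tilde{\calX}\to\calX$ obtained by blowing up the node. Its central fiber is $\tilde{X}_0 = X_0' \cup Q$ with $X_0'$ the proper transform and $Q$ the exceptional smooth quadric (for curves $Q$ is a pair of points $\{p_1,p_2\}$). The two rulings of $Q$ produce a homologically trivial cycle $\alpha = R_1 - R_2$ on $\tilde{X}_0$ whose Beilinson--Bloch height is given, following the Arakelov formalism of Gillet--Soul\'{e} and Zhang, as a sum $\sum_v \langle\alpha,\alpha\rangle_v$: the non-archimedean terms are read off from the intersection combinatorics of a regular arithmetic model of $\tilde{\calX}$, and the archimedean term is the self-pairing with respect to an admissible Green's current on the smooth generic fiber $X_t$.

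For side (ii), I would apply Hain's theory: the limit MHS at $t=0$ carries a biextension structure built from the vanishing cohomology of $\pi$ and the cycle class of $\alpha$, and its height is the asymptotic coefficient of $\log|t|$ in a period pairing of nearby cycles as $t\to 0$. By Steenbrink's explicit description of the LMHS of a semistable degeneration and Schmid's nilpotent orbit theorem, this coefficient is computable in terms of period integrals on the irreducible components of $\tilde{X}_0$ together with the gluing combinatorics; a parallel decomposition over all places holds via the $\ell$-adic monodromy and weight filtrations (the Clemens--Schmid sequence in each realization).

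The hard part will be showing that, place by place, the two local terms agree modulo $\Q\cdot\log|\Q^\times|$. For finite places the match should follow from comparing the combinatorial intersection data on the semistable model with the $\ell$-adic monodromy filtration, via the standard dictionary between special fibers and vanishing cycles. The essential novelty lies at the archimedean place: one must identify the admissible-Green's-current Arakelov pairing with the Hodge-theoretic asymptotic, which requires controlling precisely how the Green's function on $X_t$ degenerates as $t\to 0$ and extracting the coefficient of $\log|t|$. For curves this can be carried out explicitly using the classical theory of N\'eron symbols together with the period uniformization of a nodal degeneration, and the $\Q\cdot\log|\Q^\times|$ ambiguity is then seen to arise precisely from the choice of smoothing parameter $t$; in higher dimensions the comparison is exactly what remains open, and the paper's approach suggests the need for a higher-dimensional analog of N\'eron's local symbols derived from limit mixed Hodge structures.
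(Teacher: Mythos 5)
Your plan has the right high-level instinct---split the Beilinson--Bloch side into local contributions and match the N\'eron-type symbols at infinity against the asymptotics of a Hodge-theoretic period---but two structural choices in your outline would not carry over to an actual argument, and the technical engine that makes the whole thing work is missing.

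First, the biextension height in item~(ii) is \emph{intrinsically Archimedean}: it is defined as $\sum_\sigma \hgt(L_{\chi,\sigma})$, a sum over complex embeddings, as the real obstruction to splitting the associated real mixed Hodge structure. There is no finite-place piece on side~(ii) to compare with. Your proposed ``parallel decomposition over all places'' of the LMHS height via $\ell$-adic monodromy and Clemens--Schmid is not what the statement supplies and is not used by the paper. The paper instead decomposes only side~(i): the N\'eron--Tate height of $p-q$ splits over all places, its Archimedean part is shown to equal $\hgt(L_\chi)$, and its non-Archimedean parts are computed as explicit intersection numbers on a regular model $\cc/\co_K$---these are precisely the terms that produce the $\Q\cdot\log|\Q^\times|$ discrepancy. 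Your finite-place plan (comparing special-fiber combinatorics with $\ell$-adic weight filtrations) replaces a direct and entirely elementary calculation by something much heavier that would require a separate dictionary to even state.

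Second, and more seriously, your decomposition of the self-pairing $\langle p-q,p-q\rangle$ into places cannot even be written down with the classical N\'eron/Gross symbols, since those require disjoint supports. This is the central obstacle, and the paper's solution---regularizing each local pairing by fixing a cotangent vector $\xi\in\bigotimes_i\Omega_{C,p_i}$ and subtracting $\sum_i a_ib_i\log|u_i(p_i')|_\nu$ before taking a limit $D'\to D$ (Definition~\ref{def:limit_local_pairing}), then proving a local-to-global formula (Proposition~\ref{prop:local_to_global})---is completely absent from your outline. The cotangent vector is not optional: it is exactly what the small Kodaira--Spencer isomorphism $\Omega_{S,0}\isoto\Omega_{C,p}\otimes\Omega_{C,q}$ canonically furnishes from the smoothing $X/S$, and it ties the Arakelov side to the deformation parameter $\chi$ appearing in $L_\chi$. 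Without this you have no way to make the Archimedean comparison precise (Theorem~\ref{thm:regularized_and_biextension_heights}) or to isolate the correction terms. Relatedly, you attribute the $\Q\cdot\log|\Q^\times|$ ambiguity to the choice of smoothing parameter $t$; in fact that choice only contributes $\log\norm{\chi}\in\log|\Q^\times|$, and the genuinely fractional part comes from the vertical $\Q$-divisor $\Phi$ needed to make $\overline{p}-\overline{q}$ of degree zero on every component of every special fiber---an ingredient your sketch does not anticipate.
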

\noindent The constraints on $X_0$ are to ensure that the difference of the two rulings is homologically trivial~\cite[see proof of Theorem 2.1]{Schoen1985}. Recently Beilinson announced a proof of this conjecture~\cite{Beil2022}. 

We give a proof of this conjecture for curves and, in particular, give \emph{an explicit formula for the difference} in $\Q \cdot \log |\Q^\times|$. Our approach suggests a new way of computing Beilinson--Bloch heights in practice that avoids the usual elaborate integrals involved in the Archimedean contributions. 

\subsection{Deformations and heights}

Let $K$ be a number field. Let $X_0$ be a geometrically integral projective curve over $K$ containing a single node. For curves, there is no need to assume that $X_0$ is a hypersurface. Our results generalize to the case of multiple nodes; see Section~\ref{sec:smooting_multiple_nodes}. 

We recall that any embedding $\sigma \colon K \toi \C$ together with a first order smoothing deformation of $X_0$ determines a limit mixed Hodge structure~\cite{Schmid1973}. For simplicity, and without loss of generality, we can start with a family of curves $\pi \colon X \to S$ where $S$ is a quasi-projective smooth curve over~$K$ with a base point $0 \in S(K)$, the fibers $X_t = \pi\inv(t)$ are smooth for generic $t$, $X$ is smooth, and we have $X_0 = \pi\inv(0)$. 
Then, each non-zero cotangent vector $\chi \in \Omega_{S,0} \simeq K$ determines a limit mixed Hodge structure $\LMHS_{\chi,\sigma}$ of biextension type.

The real mixed Hodge structure $\Re(\LMHS_{\chi,\sigma})$ obtained by forgetting the integral structure of $L_{\chi,\sigma}$ almost splits into its weight graded pieces. The obstruction space for the splitting of real biextensions is canonically identified with the real numbers~\cite{Hain1990}. The \emph{height} of the biextension $\LMHS_{\chi,\sigma}$ is this obstruction: 
\begin{equation}
\hgt(\LMHS_{\chi,\sigma}) \coleq \obst(\Re(\LMHS_{\chi,\sigma})) \in\R. 
\end{equation}
We define $\hgt(\LMHS_\chi) \coleq \sum_\sigma \hgt(\LMHS_{\chi,\sigma})$, where the sum ranges over all embeddings of $K$ into $\C$.

Given $\lambda \in K^\times$ the height corresponding to the product $\lambda \cdot \chi \in \Omega_{S,0}$ satisfies
\begin{equation}
  \hgt(\LMHS_{\lambda \cdot \chi}) = \hgt(\LMHS_\chi) - \log \abs{\Nm_{K/\Q}(\lambda)},
\end{equation}
where $\Nm_{K/\Q} \colon K \to \Q$ is the norm map.
Therefore, $\hgt(\LMHS_\chi) \bmod \log |\Q^\times|$ is independent of the choice of $\chi$ and, in fact, of the deformation $X/S$, see for instance Theorem~\ref{height_equals_deresonation}. 
This is the height that is referred to in item~\eqref{item:intro_biext_height} above. 

Let $C$ be the normalization of $X_0$ and let $p, q$ be the two points in the preimage of the node of $X_0$ in $C$. For simplicity, assume that $p,q$ are defined over $K$. Let $\hgt(p-q) \in \R_{\ge 0}$ be the (non-normalized) N\'eron--Tate height~\cite{Gross1986,Neron1965} of the linear equivalence class $[p-q]$ in the Jacobian $\Jac(C)$ of $C$.  

Then the conjecture mentioned at the beginning specializes to the following congruence:
  \begin{equation} \label{eq:weak_curves}
    \hgt(p-q) \equiv \hgt(\LMHS_\chi) \mod  \Q \cdot \log |\Q^\times| .
  \end{equation}
Our main theorem is a precise equality in $\R$ that implies the congruence in~\eqref{eq:weak_curves}. 

\subsection{Statement of the main theorem}

Let $\co_K$ be the ring of integers of $K$ and let $\cc/\co_K$ be a proper regular model of $C$. By passing to a quadratic extension of $K$ if necessary, we assume that $p, q$ are defined over $K$. Let $\overline{p}, \overline{q}$ denote the Zariski closures on $\cc$ of $p,q \in C(K)$. 

The Kodaira--Spencer map gives a canonical identification $\Omega_{S,0} \isoto \Omega_{C,p} \otimes \Omega_{C,q}$ of $K$-vector spaces, see Section~\ref{sec:deforming_the_node}. The latter space admits an integral structure obtained from the regular model $\cc$, namely the $\co_K$-lattice 
$\Omega_{\cc/\co_K,\overline{p}} \otimes_{\co_K} \Omega_{\cc/\co_K, \overline{q}} \subset \Omega_{C,p} \otimes_K \Omega_{C,q}$. By transport of structure we obtain an $\co_K$-lattice in $\Omega_{S,0}$ determined by $\cc$. 
This integral structure allows us to define a norm $\norm{\chi} \in \R_{>0}$ for $\chi \in \Omega_{S,0}$, see~\eqref{eq:intro_norm}. 

Let $\Phi$ be a $\Q$-divisor on $\cc$ supported only on the closed fibers of $\cc/\co_K$ such that $\overline{p}-\overline{q}+\Phi$ is of degree zero on every component of every fiber of $\cc$ over $\co_K$. Such a $\Phi$ always exists, see Section~\ref{subsec:disjoint}.

Finally, for any two divisors $\cd,\ce$ on $\cc$ whose supports are disjoint over $C$ we denote by $(\cd \cdot \ce)_{\mathrm{fin}}$ the contribution from the finite places to the Arakelov intersection number of the divisors $\cd, \ce$,  see~\eqref{eq:finite_intersection}.

\begin{mainthm*}  
  There is an equality of real numbers
  \begin{equation}\label{eq:main_intro}
 \hgt(p-q)  = \hgt(\LMHS_\chi) +
\log \norm{\chi} + 2 \,(\overline{p} \cdot \overline{q})_{\mathrm{fin}} - \left( (\overline{p}-\overline{q})\cdot \Phi \right)_{\mathrm{fin}}, 
  \end{equation}
involving the N\'eron--Tate height $\hgt(p-q)$ of the divisor $p-q$ and the biextension height $ \hgt(\LMHS_\chi)$ of the limit mixed Hodge structure $\LMHS_\chi$.
\end{mainthm*}

Note that  the real numbers $\log \norm{\chi}$ and $(\overline{p} \cdot \overline{q})_{\mathrm{fin}}$ lie in $\log |\Q^\times| $. However, the intersection number $\left( (\overline{p}-\overline{q})\cdot \Phi \right)_{\mathrm{fin}}$ lies only in $\Q \cdot \log |\Q^\times| $, since $\Phi$ is a $\Q$-divisor. This explains the congruence condition in~\eqref{eq:weak_curves}.

We emphasize that the equality~\eqref{eq:main_intro} suggests a new way of expressing Beilinson--Bloch heights, not just for curves but in general. A generalization of our proof to higher dimensions is the subject of current research by the authors.

In the companion paper~\cite{BdJS2} we describe how to numerically compute the height of a limit mixed Hodge structure associated to any smoothing of a nodal projective hypersurface of odd dimension, following the ideas in~\cite{Ser2019}. As an application we numerically verify the conjecture stated above for various nodal cubic threefolds.

\subsection{Overview of the paper}  

In Section~\ref{sec:biext_period_heights}, we give an explicit formula to compute the height of a biextension from its mixed periods  (Theorem~\ref{thm:heights_agree}). This section includes an overview of biextensions and their heights, following Hain~\cite{Hain1990}.

In Section~\ref{sec:lmhs}, we express the biextension height of the limit mixed Hodge structure $L_\chi$ as a regularized limit period integral \emph{on the normalization} $C$ of $X_0$ (Theorem~\ref{height_equals_deresonation}).

In Section~\ref{sec:regularized_local_pairing}, we recall N\'eron's local pairings, define \emph{regularized} local N\'eron pairings, and prove a local-to-global formula for regularized N\'eron pairings (Proposition~\ref{prop:local_to_global}). 

In Section~\ref{sec:proof_main} we realize the height of the limit mixed Hodge structure $L_\chi$ as a regularized Archimedean N\'eron pairing evaluated at the divisor $p-q$ (Theorem~\ref{thm:regularized_and_biextension_heights}). We then explicitly compute the regularized non-Archimedean pairing for the divisor $p-q$ (Theorem~\ref{thm:local_nonarch_limit}). Our local-to-global formula for regularized pairings completes the proof of the main theorem. We finish the paper by indicating how the result generalizes to the case of multiple nodes.

\subsection{Acknowledgments}

We thank all members of the International Groupe de Travail on differential equations in Paris for numerous valuable comments and their encouragement and, in particular, Vasily Golyshev, Matt Kerr, and Duco van Straten. 
We thank Greg Pearlstein for helpful discussions and sharing his notes on limits of height pairings with us.
We thank Matthias Schütt for his valuable comments on our paper.
We thank Raymond van Bommel, David Holmes and Steffen M\"uller for helpful discussions. 
We also acknowledge the use of Magma~\cite{Magma} and SageMath~\cite{sagemath} for facilitating experimentation.
We thank Özde Bayer Sertöz and Ali Sinan Sertöz for the pictures in the paper.
The third author gratefully acknowledges support from MPIM Bonn. 
We thank the referee for a careful reading of the manuscript and insightful suggestions.

\section{Biextensions, period matrices and heights} \label{sec:biext_period_heights}

The goal of this section is to express the height of a biextension in a way amenable to calculation. More precisely, we give an explicit formula for the height of a biextension in terms of a \emph{period matrix} of the biextension, see Theorem~\ref{thm:heights_agree}. 

We start this section by recalling the notion of biextensions and their heights. Then we describe the period matrix of a biextension. Then we state and prove our formula for the height of a biextension. We finish this section with a brief discussion of twisted biextensions.

Throughout this section, $H$ denotes a pure integral Hodge structure of weight $-1$. Very little would be lost for our purposes if one were to take $H = \H^1(C,\Z(1))$ for a smooth proper curve $C$. A polarization on $H$ is not needed to define or compute the height.

\subsection{Biextensions and their heights}

We follow Hain's article~\cite{Hain1990} closely. For any integral mixed Hodge structure $V$ we will write $V_\Z \subset V_\R \subset V_\C$ for the underlying $\Z$-lattice and rational, real, and complex vector spaces. The Hodge filtration on $V$ is denoted by $F^\bullet V \subset V_\C$ and the (integral) weight filtration by $W_\bullet V \subset V_\Z$. In general, the weight filtration must be defined on $V_\Q$ in order to construct an abelian category. However, to simplify notation for our particular case we will work with weight filtrations defined on $V_\Z$.

\begin{definition}\label{def:biextension}
  A \emph{biextension} of $H$ is an integral mixed Hodge structure $B$ together with an identification of the weight graded pieces with $\Z(1) \oplus H \oplus \Z$, i.e.,
  \begin{equation}
   \Gr^W B =  \Z(1) \oplus H \oplus \Z.
  \end{equation}
\end{definition}

Following~\cite{Hain1990}, we call $\hatH \coleq \Hom(H,\Z(1))$ the \emph{dual} of $H$. The dual of $\hatH$ is canonically isomorphic to $H$.
If $B$ is a biextension of $H$ then $W_{-1}B$ is an extension of $H$ by $\Z(1)$, while $B/W_{-2}B$ is an extension of $\Z$ by $H$. 

\begin{definition}
  The \emph{Jacobian} $JH$ of $H$ is the complex torus defined by
  \begin{equation}
    JH \coleq H_\C / \left( F^0H + H_\Z \right).
  \end{equation}
\end{definition}
The Jacobian of $H$ parametrizes extensions $0 \to H \to E \to \Z \to 0$, that is we have a canonical identification
  \begin{equation}
    J H = \Ext^1_{\MHS}(\Z,H). 
  \end{equation} 
There is a natural isomorphism between the torus $J \hatH$ and $\Ext^1_{\MHS}(H,\Z(1))$, as we can see by applying $\Hom(\bullet,\Z(1))$ to elements of $J \hatH = \Ext^1_{\MHS}(\Z,\hatH)$.

We see that a biextension $B$ of $H$ determines a point $(B/W_{-2}B,W_{-1}B)$ in $JH \times J \hatH$. However $B$ is not determined by this pair of extensions. Hain~\cite[Lemma 3.2.5]{Hain1990} proves that the moduli of biextensions $\cb(H)$ of $H$ is given by the $\G_m$-torsor associated to 
a twisted Poincar\'e bundle over $JH \times J\hatH$. 

If we forget the integral structure of $H$ and pass to the associated real Hodge structure, then the moduli of biextensions becomes much simpler. Let us write $\Re$ for the forgetful functor from $\Z$- to $\R$-(mixed) Hodge structures.

\begin{proposition}[Proposition 3.2.8 of~\cite{Hain1990}]\label{prop:no_real_exts}
  There are no non-trivial extensions of $\R$ by $\Re(H)$ as real Hodge structures, that is,
  \begin{equation}
    \Ext^1_{\MHS_\R}(\R,\Re(H)) = 0.
  \end{equation}
\end{proposition}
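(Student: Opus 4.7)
The plan is to combine a Carlson-style description of extensions in the category of real mixed Hodge structures with a direct Hodge-symmetry argument. First, I would argue that any extension $0 \to \Re(H) \to E \to \R \to 0$ admits two natural lifts of $1 \in \R$: an element $e_\R \in E_\R$, available because the sequence is exact on underlying real vector spaces, and an element $e_F \in F^0 E_\C$, available because the Hodge filtration is strict with respect to morphisms of mixed Hodge structures, so $F^0 E_\C \twoheadrightarrow F^0 \R_\C = \C$. The difference $e_\R - e_F$ lies in $H_\C$ and is well-defined modulo $H_\R + F^0 H_\C$, and this class determines the extension up to isomorphism. That gives the Carlson-type identification
\begin{equation*}
\Ext^1_{\MHS_\R}(\R,\Re(H)) \;\cong\; H_\C \big/ \bigl(H_\R + F^0 H_\C\bigr).
\end{equation*}

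Second, I would use that $H$ is pure of weight $-1$ to pin down the right-hand side. The Hodge decomposition reads $H_\C = H^{-1,0} \oplus H^{0,-1}$, so $F^0 H_\C = H^{0,-1}$. To kill the quotient above it therefore suffices to show that the natural map $H_\R \to H_\C/H^{0,-1} \cong H^{-1,0}$ is surjective.

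Third, this surjectivity is immediate from Hodge symmetry $\overline{H^{-1,0}} = H^{0,-1}$: for any $w \in H^{-1,0}$ the element $w + \bar w$ is fixed by complex conjugation, hence lies in $H_\R$, and its image in $H_\C / H^{0,-1}$ is $w$. Consequently $H_\R + F^0 H_\C = H_\C$ and the classifying group vanishes. I do not expect any substantive obstacle; the whole argument is an unraveling of the definition of a real mixed Hodge structure combined with the observation that, unlike in the integral case where the lattice survives to produce the nontrivial torus $JH$, passing to $\R$-coefficients eliminates precisely the obstruction to splitting. The only step that uses more than formal manipulation is the strictness of $F^\bullet$ on morphisms of mixed Hodge structures, which is a standard consequence of Deligne's original results.
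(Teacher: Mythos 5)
Your proof is correct, and it is in fact precisely the ``quick proof'' the paper deliberately declines to give (the paper opens its own argument with ``Instead of a quick proof, we will give a longer but somewhat effective proof''). You invoke the Carlson-type identification
\begin{equation*}
\Ext^1_{\MHS_\R}(\R,\Re(H)) \cong H_\C \big/ (H_\R + F^0 H_\C)
\end{equation*}
together with strictness of the Hodge filtration, and then kill the quotient by Hodge symmetry. The paper instead picks $s \in E_\R$ and $\eta \in F^0 E$ lifting $1$, writes $\eta - s = f + r$ in the decomposition $H_\C = F^0 H \oplus H_\R$, and observes that $s + r = \eta - f$ lies in $E_\R \cap F^0 E$ and splits $E$. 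The two arguments hinge on the same weight-$(-1)$ Hodge-theoretic input --- $H_\C = H_\R + F^0 H$ (and in the paper's version, that this sum is direct) --- but the paper's constructive version carries real additional value for the rest of the article: the specific element $\omega = \eta - f$ it produces is exactly the object re-used in the proof of Theorem~\ref{thm:heights_agree} to extract an explicit formula for the biextension height from a period matrix. Your route establishes the vanishing cleanly and efficiently, but it does not hand you a distinguished splitting to compute with, which is the payoff the authors were after.
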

\begin{proof}
  Instead of a quick proof, we will give a longer but somewhat effective proof. Let $E$ be an extension
  \begin{equation}
    0 \to \Re(H) \to E \to \R \to 0.
  \end{equation}
  Choose $s \in E_\R$ and $\eta \in F^0E$ mapping to $1 \in \R$. We have a decomposition $H_\C = F^0H \oplus H_\R$ so we can write $\eta - s = f + r$ with $f \in F^0H$ and $r \in H_\R$. Observe that $s+r = \eta -f \in E_\R \cap F^0H$ splits $E$.
\end{proof}

 Proposition~\ref{prop:no_real_exts} implies that $\Re(B)$ has no moduli with respect to the two extensions $(B/W_{-2}B,W_{-1}B)$. Nevertheless, such ``real biextensions'' of $\Re H$ have moduli, which we denote by $\cb_\R(H)$. 
\begin{definition}
We define a natural map $\varphi \colon \R \to \cb_\R(H)$. Given $c' \in \R$ construct $B = \varphi(c')$ over $B_\R = \R(1) \oplus H_\R \oplus \R$ with the obvious weight filtration and pieces of the Hodge filtration $F^kB_\C$ for $k \neq 0$ coming from the direct sum structure. As for $k=0$, we set $F^0B_\C = F^0H_\C + \C (c',0,1)$. 
\end{definition}
The following statement is a slight modification of Corollary 3.2.9 and Proposition 3.2.13 of~\cite{Hain1990}. 

\begin{proposition}\label{prop:real_biext_moduli}
  The moduli of real biextensions $\cb_\R(H)$ of $\Re H$ is identified with the real line $\R$ via the map $\varphi$ above.
\end{proposition}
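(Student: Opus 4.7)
The plan is to verify the bijection $\varphi$ by a direct calculation, with Proposition~\ref{prop:no_real_exts} supplying the key vanishing. As a preparatory step, I would note the dual of Proposition~\ref{prop:no_real_exts}: applying it to $\hatH$ (which is again pure of weight $-1$) gives $\Ext^1_{\MHS_\R}(\Re H,\R(1)) = 0$, and an analogous weight argument yields $\Hom_{\MHS_\R}(\Re H,\R(1)) = \Hom_{\MHS_\R}(\R,\Re H) = 0$. Consequently, for any real biextension $B$ there is a unique splitting of $W_{-1}B$ as real MHS inducing the identity on graded pieces, which I fix once and for all.

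For injectivity of $\varphi$, an isomorphism $\psi\colon B(c_1)\to B(c_2)$ of real biextensions, written in the standard splitting, has the form $\psi(x,y,z)=(x+a(y)+cz,\,y+bz,\,z)$ for real-linear $a\colon H_\R\to \R(1)_\R$, $b\in H_\R$, $c\in\R(1)_\R$. The condition $\psi(F^0 B(c_1))\subset F^0 B(c_2)$ forces in turn: $a(F^0H_\C)=0$, whence $a=0$ since $F^0H_\C + \overline{F^0H_\C}=H_\C$ for $H$ pure of weight $-1$; then $b\in F^0H_\C\cap H_\R=0$; and finally $c_1+c=c_2$ in $\R(1)_\C$. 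Because $c_1,c_2$ lie in the real subspace complementary to $\R(1)_\R$ but $c\in\R(1)_\R$, this forces $c=0$ and $c_1=c_2$.

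For surjectivity, given $B\in\cb_\R(H)$, I would fix the canonical splitting $W_{-1}B\cong\R(1)\oplus\Re H$ above together with any $\R$-linear section $s\colon\R\to B_\R$ of $B_\R\twoheadrightarrow\R$, obtaining $B_\R\cong\R(1)\oplus H_\R\oplus\R$. Pick $\eta\in F^0 B$ lifting $1\in\R$ (possible since $F^0(\Gr^W_0 B)=\C$) and write $\eta-s(1)=(c,h)\in\R(1)_\C\oplus H_\C$. The lift $\eta$ is unique up to $F^0(W_{-1}B)_\C=F^0H_\C$ (affecting only $h$), and $s$ is unique up to $(W_{-1}B)_\R=\R(1)_\R\oplus H_\R$ (affecting $c$ by $\R(1)_\R$ and $h$ by $H_\R$). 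Hence $(c,h)$ is well-defined in
\begin{equation*}
\frac{\R(1)_\C\oplus H_\C}{\left(0\oplus F^0H_\C\right)+\left(\R(1)_\R\oplus H_\R\right)}=\frac{\R(1)_\C}{\R(1)_\R}\oplus\frac{H_\C}{F^0H_\C+H_\R}=\R\oplus 0=\R,
\end{equation*}
the second summand vanishing by Proposition~\ref{prop:no_real_exts}, whose proof shows $H_\C=F^0H_\C+H_\R$. Choosing representatives with $h=0$ and $c\in\R\subset\R(1)_\C$, we find $F^0 B=F^0H_\C+\C(c,0,1)$, so $B\cong\varphi(c)$.

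The main obstacle is purely bookkeeping: tracking the three sources of ambiguity (the splitting of $W_{-1}B$, the lift $\eta$, and the section $s$) and confirming that Proposition~\ref{prop:no_real_exts} kills the Hodge-theoretic contribution from $H$ while leaving the real line $\R(1)_\C/\R(1)_\R$ as the true invariant.
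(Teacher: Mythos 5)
Your proposal is correct and takes essentially the same approach as the paper: both use Proposition~\ref{prop:no_real_exts} (and its dual) to split off the $H$-part, choose a lift $\omega \in F^0 B$ of the generator of $\Gr^W_0$ together with a real section $s$, and read off the invariant $c'$ as the real part of $\omega - s$ modulo the ambiguities killed by $F^0 H_\C + H_\R = H_\C$. You additionally spell out the injectivity of $\varphi$ as an explicit matrix computation, which the paper leaves implicit in the phrase ``we describe the inverse map,'' but this is a difference in thoroughness, not in method.
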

\begin{proof}
  We describe the inverse map $\varphi\inv$. Using Proposition~\ref{prop:no_real_exts} we split the real extension $B/W_{-2}B$ identifying it with $\Re H \oplus \R$. As $F^0\R(1) = 0$, there exists a unique $\omega \in F^0 B$ lifting $(0,1) \in B/W_{-2}B = \Re H \oplus \R$. Choose a section $s \in B_\R$ lifting $(0,1) \in B/W_{-2}B$. Split $W_{-1}B = \R(1) \oplus \Re H$ and write $\omega-s = (c'+\tpi \cdot r, 0) \in W_{-1}B$. Then the section $s + \tpi\cdot r$ splits $B_\R$ in such a way that $\omega = (c',0,1)$. 
\end{proof}

As the $\R$-extensions of $\Re H$ and $\Re \hatH$ split, the inverse of $\varphi$ measures the obstruction for the real biextension $\Re B$ to split completely into $\R(1) \oplus \Re H \oplus \R$ as a real mixed Hodge structure.  The obstruction map alluded to in the introduction is $\obst \coleq \varphi\inv$.  

\begin{definition}\label{def:hgt_of_B}
 For a biextension $B$, the \emph{height of $B$} is the real number $\hgt(B) \coleq \obst(\Re(B))$. 
\end{definition}

\subsection{Period matrices and their heights}

We can represent a biextension with a ``period matrix'' which requires a choice of coordinates respecting the weight and Hodge filtrations.

Let $\gamma^0,\dots, \gamma^{2k+1}$ be a basis of $B_\Z$ respecting the weight filtration, i.e.,
\begin{itemize}
  \item $\gamma^0 = \tpi \in \Z(1)$,
  \item $\gamma^1, \, \dots, \, \gamma^{2k} \mod W_{-2}B$ is a basis for $H_\Z$, 
  \item $\gamma^{2k+1} \mod W_{-1}B = 1 \in \Z$.
\end{itemize}
Let $\omega_1,\dots,\omega_{k+1} \in F^0B$ be a basis respecting the weight filtration, i.e.,
\begin{itemize}
  \item $\omega_1, \dots ,\omega_k \mod W_{-2}B \otimes \C$ is a basis for $F^0 H$,
  \item $\omega_{k+1} \mod W_{-1}B \otimes \C = 1 \in F^0 \Z$. 
\end{itemize}

\begin{definition}\label{def:period_matrix} For each $i$, write $\omega_i = \sum_{j} a_{ij} \gamma^j$ with $a_{ij} \in \C$. We call the matrix $P_B = \left( a_{ij} \right)$ a \emph{period matrix of the biextension $B$}.  We can write the period matrix $P_B$ as follows
\begin{equation}\label{eq:PB}
  P_B = 
  \left( 
  \begin{array}{c|ccc|c}
       &  &     &  & \\
    b  &  & P_H &  & 0  \\
       &  &     &  & \\ \hline 
    c  &  & a   &  & 1
  \end{array}
 \right) \, , 
\end{equation}
with $P_H \in \C^{k \times 2k}$, $a \in \C^{1 \times 2k}$, $b \in \C^{k \times 1}$, and $c \in \C$.

 Denote the real and imaginary parts of $P_H$ by $\Re P_H$ and $\Im P_H$ so that $P_H = \Re P_H + \ii \Im P_H$. 
\end{definition}
\begin{definition}\label{def:hgt_of_matrix}
  The \emph{height} of the matrix $P_B$ is given by the formula 
  \begin{equation}\label{eq:hgt_PB}
  \hgt(P_B) = - 2 \pi \left(  \Im c - \Im a \cdot \begin{pmatrix} \Im P_H \\ \Re P_H \end{pmatrix}^{-1} \cdot 
    \begin{pmatrix} \Im b \\ \Re b \end{pmatrix}   \right).  
  \end{equation}
\end{definition}

\subsection{A formula for the height of a biextension}

The main result of this section is as follows.

\begin{theorem}\label{thm:heights_agree} 
  Let $B$ be a biextension. The height $\hgt(B)$ in the sense of Hain \cite{Hain1990} is equal to the height $\hgt(P_B)$, as in Definition~\ref{def:hgt_of_matrix}, of any period matrix $P_B$ of $B$.  
\end{theorem}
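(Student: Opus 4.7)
The plan is to explicitly run the inverse of the parametrization $\varphi$ from the proof of Proposition~\ref{prop:real_biext_moduli} in the coordinates of a period matrix $P_B$, and observe that the real number produced is exactly the expression from Definition~\ref{def:hgt_of_matrix}. Since the answer will coincide with the intrinsically defined $\hgt(B)$, basis-independence of $\hgt(P_B)$ drops out for free.

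I will first split $B/W_{-2}B = \Re H \oplus \R$ as a real mixed Hodge structure following Proposition~\ref{prop:no_real_exts}. Using $\eta \coleq \omega_{k+1} \in F^0$ and $s_0 \coleq \gamma^{2k+1} \in (B/W_{-2}B)_\R$, both mapping to $1 \in \R$, the difference $\eta - s_0 \in H_\C$ equals $\sum_{j=1}^{2k} a_j\gamma^j$, and I decompose it uniquely as $f + r$ with $f \coleq \sum_i \lambda_i\omega_i \in F^0 H$ and $r \coleq \sum_j r_j\gamma^j \in H_\R$. Matching coefficients gives $\lambda^T P_H + r^T = a$, and separating imaginary parts yields $(\Re\lambda^T,\Im\lambda^T)\bigl(\begin{smallmatrix}\Im P_H \\ \Re P_H\end{smallmatrix}\bigr) = \Im a$; the matrix is invertible because $H_\C = F^0 H \oplus H_\R$, so $(\Re\lambda^T,\Im\lambda^T) = \Im a \cdot \bigl(\begin{smallmatrix}\Im P_H \\ \Re P_H\end{smallmatrix}\bigr)^{-1}$. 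With this $\lambda$, the element $\omega \coleq \omega_{k+1} - \sum_i \lambda_i\omega_i \in F^0 B$ is the unique $F^0$-lift of the Hodge splitting section of $B/W_{-2}B$, while $s \coleq \gamma^{2k+1} + \sum_j r_j\gamma^j \in B_\R$ is a compatible real lift. A brief computation using $r^T = a - \lambda^T P_H$ shows that the $\gamma^j$-coefficients of $\omega - s$ vanish for $j = 1,\ldots,2k$, so $\omega - s = (c - \sum_i \lambda_i b_i)\gamma^0 \in W_{-2}B_\C = \R(1)_\C$.

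The real splitting $W_{-1}B = \R(1) \oplus \Re H$ is unique (there are no non-trivial real extensions between pure Hodge structures of different weights), so the $\R(1)$-entry of $\omega - s$ is unambiguous. Identifying $\R(1)_\C$ with $\C$ via $\gamma^0 = 2\pi i$ and writing $(c - \sum_i \lambda_i b_i)\cdot 2\pi i = c' + 2\pi i\, r'$ with $c',r' \in \R$, Proposition~\ref{prop:real_biext_moduli} gives $\hgt(B) = c' = -2\pi\Im(c - \sum_i \lambda_i b_i)$. Expanding $\Im(\sum_i \lambda_i b_i) = (\Re\lambda^T,\Im\lambda^T)\bigl(\begin{smallmatrix}\Im b \\ \Re b\end{smallmatrix}\bigr)$ and substituting the closed form for $\lambda$ yields the expression in Definition~\ref{def:hgt_of_matrix} on the nose. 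The main obstacle is pure bookkeeping: keeping three splittings straight (of $H_\C$, of $B/W_{-2}B$, and of $W_{-1}B$ as real MHS) and respecting the Tate twist convention $\R(1) = 2\pi i\,\R \subset \C$ when converting the $\R(1)_\C$-entry into the real scalar $c'$; no new idea beyond Hain's framework is required.
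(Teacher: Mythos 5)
Your proposal is correct and follows essentially the same route as the paper: you unwind $\varphi^{-1}$ from Proposition~\ref{prop:real_biext_moduli} in period-matrix coordinates, decompose $\omega_{k+1}-\gamma^{2k+1}\bmod W_{-2}B$ in $H_\C = F^0H \oplus H_\R$, solve for the $F^0$-component by separating imaginary parts (your linear-algebra setup is just a direct rewriting of the paper's factorization of the projection $H_\C\to F^0H$ through $\Im\colon H_\C\to \ii H_\R$), and read off the $\gamma^0$-coordinate of $\omega - s$. The computations agree line-for-line with the paper's, including the final twist-convention bookkeeping $\Re(\tilde c\cdot 2\pi\ii) = -2\pi\Im\tilde c$.
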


\begin{proof}
  We prove that $\hgt(P_B)$ equals $\varphi\inv(\Re B)$ by following the proof of Proposition~\ref{prop:real_biext_moduli} above. We recall the bases $\gamma^j,\, \omega_i$ used to define the period matrix~\eqref{eq:PB}.

  We use the section $s = \gamma^{2k+1} \in B_\R$ and let $\eta = \omega_{k+1} - s \mod W_{-2}B \otimes \C$. Note that $\eta$ lands in $H_\C$. As an $\R$-vector space we have a decomposition $H_\C = H_\R \oplus F^0H$. Write $\eta = r + f$ in this decomposition. As in the proof of Proposition~\ref{prop:no_real_exts}, $\omega_{k+1} - f$ splits $\Re(B)/W_{-2}B = \Re H \oplus \R$.

  There is a unique lift $\tilde f$ of $f$ to $F^0 B$ and $\omega \coleq \omega_{k+1} - \tilde f$ is the unique element in $F^0 B$ mapping to $(0,1) \in \Re(B)/W_{-2}B$. Let $\tilde c \in \C$ be such that $\omega = \tilde c \cdot \gamma^0 \mod \gamma^1,\dots,\gamma^{2k+1}$. It is now clear that $\Re(\tilde c \cdot \gamma^0) = \varphi\inv(\Re B)$.  

  We now express $\tilde f$ and $\tilde c$ using the period matrix $P_B$ and the notation of~\eqref{eq:PB}. The coordinates of $\eta \in H_\C$ with respect to the basis $\gamma^1,\dots,\gamma^{2k} \mod W_{-2}B$ are given by $a \in \C^{1\times 2k}$. We will compute the coordinates of $f \in F^0H$ with respect to $\omega_1,\dots,\omega_{k} \mod W_{-2}B\otimes \C$. The projection $H_\C \to F^0H$ can be factored through $\Im \colon H_\C \to \ii H_\R$ and the inverse of $\Im \colon F^0H \to \ii H_\R$ (as real vector spaces). Using the real bases $\omega_1,\dots,\omega_k, \ii \omega_1, \dots, \ii \omega_k \mod W_{-2}B \otimes \C$ for $F^0H$ and $\ii \gamma_1,\dots, \ii \gamma_{2k}\mod W_{-2}B \otimes \C$ for $\ii H_\R$, the map $F^0 H \to \ii H_\R$ is represented by the matrix
  \begin{equation}
    \begin{pmatrix} \Im P_H \\ \Re P_H \end{pmatrix}
  \end{equation}
acting on the right. It follows that the $\C$-coordinates of $\tilde f$ in $\omega_1,\dots,\omega_k$ are
\begin{equation}
  [\tilde f] \coleq \Im a \cdot \begin{pmatrix} \Im P_H \\ \Re P_H \end{pmatrix}^{-1} \cdot \begin{pmatrix} \id_k \\ \ii \cdot  \id_k \end{pmatrix},
\end{equation}
where $\id_k$ is the $k \times k$ identity matrix. Now $\omega_{k+1}-\tilde f$ has $\gamma_0$ coordinate
\begin{equation}
  \tilde c = c - [\tilde f] \cdot b. 
\end{equation}
Contracting the two products $\id_k \cdot b$ to $b$ and taking imaginary parts gives us the expression~\eqref{eq:hgt_PB}. For this we note that $\Re(\tilde c \gamma^0)=\Re(\tilde c \cdot \tpi) = -2 \pi \Im(\tilde c)$.
\end{proof} 

\subsection{Twisted biextensions}

In geometrical contexts we often encounter Tate twists of biextensions. We make the following definitions.

\begin{definition}\label{def:twisted_biextension}
  An integral mixed Hodge structure $V$ is called a \emph{$k$-twisted biextension} if $V(-k)$ is a biextension $B$, i.e., $V = B(k)$. The height of $V$ is defined to be $\hgt(V) \coleq \hgt(B)$. Further, the notion of a period matrix $P_B$ of a biextension $B$ naturally generalizes to the notion of period matrix $P_V$ of a twisted biextension $V$.
\end{definition}

\begin{remark} 
  \label{rem:twisted} Suppose $V=B(k)$ is a twisted biextension. If $P_B$ is a period matrix of $B$ then $\tpip^{-k} P_B$ is a period matrix of $V$.
\end{remark}

\section{Smoothing a nodal curve}\label{sec:lmhs}

Let $X_0$ be an integral proper complex curve with a single node.  Let $C$ be the normalization of $X_0$ and let $p, q \in C$ be the preimages of the node. 

We put $X_0$ into a proper flat family of curves $X/S$ where $S$ is a connected quasi-projective smooth complex curve with base point $0 \in S$, the total space $X$ is smooth, and the generic fiber  is smooth. 

The choice of a non-zero cotangent vector $\chi \in \Omega_{S,0}$ determines~\cite{Schmid1973} a limit mixed Hodge structure $\LMHS_\chi$ associated to the deformation $X/S$. This limit mixed Hodge structure is a $(-1)$-twisted biextension in the sense of Definition~\ref{def:twisted_biextension}, with central piece $H=\H^1(C,\Z)$. 
Its height is controlled by the element $\chi$ in the sense that
\begin{equation}
  \hgt(\LMHS_{ \lambda \cdot \chi }) = \hgt(\LMHS_\chi) - \log \abs{\lambda} \, ,  \quad \lambda \in \C^\times.
\end{equation}

Our main result in this section, Theorem~\ref{height_equals_deresonation}, gives a formula that expresses the height of our limit mixed Hodge structure in terms of a limit integral \emph{on the curve} $C$ \emph{itself}. 

The Kodaira--Spencer map induces an isomorphism 
\begin{equation} \label{eq:canonical_identification}
\kappa \colon \Omega_{S,0} \isoto \Omega_{C,p} \otimes \Omega_{C,q}
\end{equation}
of one-dimensional $\C$-vector spaces. Let $u$ and $v$ be local holomorphic coordinates around $p$ and $q$, respectively, with $u(p)=v(q)=0$ and such that $\dd u|_p \otimes \dd v|_q = \kappa(\chi)$.

\begin{theorem} \label{height_equals_deresonation}
  Let $\eta \in \H^0(C,\Omega_C(p+q))$ be a differential with simple poles and purely imaginary periods. 
  Assume $\eta$ is scaled so that it has residue $1$ at $p$ and $-1$ at $q$. Then, we have an equality
  \begin{equation}
    \hgt(\LMHS_\chi) =  \lim_{p',q' \to p,q} \Re \left( \int_{q'}^{p'} \eta \right) - \log(\abs{u(p')v(q')}).
  \end{equation}
\end{theorem}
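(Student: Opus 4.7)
The plan is to apply Theorem~\ref{thm:heights_agree} to an explicit period matrix of the biextension $B = \LMHS_\chi(1)$, computed asymptotically along a one-parameter smoothing $X/S$ of $X_0$. Take a local parameter $t$ at $0 \in S$ with $\dd t|_0 = \chi$; by the Kodaira--Spencer identification~\eqref{eq:canonical_identification}, the germ of $X$ at the node admits a local model $xy = t$ in which $u = x|_{y=0}$ and $v = y|_{x=0}$ are the prescribed coordinates on the two branches of $X_0$.

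For $t$ close to $0$, choose in $\H_1(X_t, \Z)$ the vanishing cycle $\delta_t = \{|x| = \sqrt{|t|}\}$, a transverse cycle $\gamma_t$ with $\gamma_t \cdot \delta_t = 1$ specializing to a loop through the node, and lifts $\alpha_i, \beta_i$ of a symplectic basis of $\H_1(C, \Z)$. Dually, pick holomorphic differentials $\omega_1(t), \dots, \omega_g(t)$ on $X_t$ spanning the subspace $\{\omega \in \H^0(X_t, \Omega_{X_t}) : \int_{\delta_t} \omega = 0\}$ (which specialize to a basis of $\H^0(C, \Omega_C)$), and a distinguished $\omega_{g+1}(t)$ uniquely determined by $\int_{\delta_t}\omega_{g+1} = 2\pi i$ together with purely imaginary periods over the lifted $\alpha_i, \beta_i$; standard degeneration theory for families on smoothings of nodal curves yields $\omega_{g+1}(t) \to \eta$ as $t \to 0$.

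The entries of the period matrix $P_B$ in block form~\eqref{eq:PB} are the classical periods of these differentials on $X_t$ divided by $2\pi i$. The middle row $a$ consists of $(2\pi i)^{-1}$ times purely imaginary numbers, hence of real numbers, so $\Im a = 0$ in the limit; the bilinear term in~\eqref{eq:hgt_PB} therefore vanishes, leaving $\hgt(B) = -2\pi \Im c$ with $c = (2\pi i)^{-1} \int_{\gamma_t} \omega_{g+1}(t)$. Picard--Lefschetz monodromy sends $\gamma_t \mapsto \gamma_t + \delta_t$, making this period multivalued with shift $2\pi i$; the single-valued Schmid regularization is $2\pi i \cdot c = \lim_{t \to 0}\bigl[\int_{\gamma_t} \omega_{g+1}(t) - \log t\bigr]$. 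Splitting $\gamma_t$ into a far path on $C$ from $q'$ to $p'$ (contributing $\int_{q'}^{p'} \omega_{g+1}|_C \to \int_{q'}^{p'} \eta$) and a neck path from $(u(p'), t/u(p'))$ to $(t/v(q'), v(q'))$ in the local model, the neck integral equals $\log t - \log(u(p')v(q'))$ plus a correction of size $O(|u(p')|) + O(|v(q')|)$ uniform in $t$. After subtracting $\log t$, taking $t \to 0$ and then $p', q' \to p, q$ annihilates the correction and produces $2\pi i \cdot c = \lim_{p', q' \to p, q}\bigl[\int_{q'}^{p'} \eta - \log(u(p')v(q'))\bigr]$. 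Taking real parts via $-2\pi \Im(w/(2\pi i)) = \Re(w)$ delivers the claimed formula.

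The main technical obstacle is the uniform control of the bounded-holomorphic correction on the neck. Writing $\omega_{g+1}(t) = f(x,t)\,\dd x$ locally with Laurent expansion $f(x,t) = 1/x + \sum_{n \neq -1} a_n(t)\, x^n$, one must show that the negative-power coefficients $a_{-k}(t)$ for $k \geq 2$ decay like $O(t^{k-1})$. This is precisely where the Kodaira--Spencer alignment intervenes: comparing the $x$-Laurent expansion of $\omega_{g+1}(t)$ near the $p$-branch with its $y$-Laurent expansion near the $q$-branch via $xy = t$ forces identities of the form $a_{-k}(t) = -b_{k-2}(t)\, t^{k-1}$ for bounded $b_{k-2}(t)$, which make the neck correction genuinely $O(|u(p')|) + O(|v(q')|)$ and guarantee that no residual $\log|t|$ survives the monodromy subtraction.
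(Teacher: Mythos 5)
Your proposal is correct and follows the same high-level strategy as the paper: apply Theorem~\ref{thm:heights_agree} to the Schmid period matrix of $\LMHS_\chi$, note that purely imaginary periods force $\Im a = 0$ so that the bilinear correction in~\eqref{eq:hgt_PB} drops out, and identify the corner entry $c$ with the regularized period $I_\chi(\alpha,\omega_{g+1}) = \lim_{t\to 0}\bigl[\int_{\gamma_0(t)}\mu_{g+1}(t) - \log t\bigr]$ (the paper's~\eqref{eq:Ixi2}). The only substantive divergence is in how the neck contribution is controlled. The paper (Lemma~\ref{lem:extrinsic_to_intrinsic}) subtracts the Poincar\'e--Griffiths residue form $\nu(r) = \dd u/u = -\dd v/v$ and uses that $\mu_{g+1}(r) - \nu(r)$ extends holomorphically across $r=0$ (a consequence of $\mu_{g+1}$ being a section of the relative dualizing sheaf with residue $1$); it then compares neck integrals of $\nu(r)$ on $X_r$ against truncated integrals of $\nu(0)$ on $X_0$, both of which evaluate to $\log r - 2\log\varepsilon$. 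You instead work with the Laurent expansion $f(x,t) = 1/x + \sum_{n\neq -1}a_n(t)x^n$ and argue that the coefficients $a_{-k}(t)$ for $k\ge 2$ decay like $O(t^{k-1})$ by comparing the $x$-branch expansion against the $y$-branch expansion through $xy=t$. These are two packagings of the same underlying fact, and your estimate is right: writing $\mu_{g+1}(t) = h(u,v)\,\dd u/u$ with $h$ holomorphic near the node and $h(0,0)=1$, the decomposition $h - 1 = u g_1 + v g_2$ gives exactly the claimed decay after restricting to $uv=t$. Your route has the small advantage that it handles arbitrary approach $p',q'\to p,q$ in one stroke via the uniform-in-$t$ error term, where the paper first establishes the limit along the diagonal slice $p'=u^{-1}(\sqrt r)$, $q'=v^{-1}(\sqrt r)$ and then invokes path-independence separately (Lemma~\ref{lem:well_defined}); the paper's residue-form argument is cleaner in that it sidesteps coefficient bookkeeping entirely. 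One further cosmetic difference: you require $\omega_{g+1}(t)$ to have purely imaginary periods for every small $t\neq 0$, while the paper only imposes this on the limit $\mu_{g+1}(0)=\omega_{g+1}$, which is all that matters for the biextension period matrix of $\LMHS_\chi$.
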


\subsection{The small Kodaira--Spencer map}\label{sec:deforming_the_node}

We recall here the construction of the ``small'' Kodaira--Spencer map~\eqref{eq:canonical_identification}. For our applications, we assume everything in sight is defined over a given subfield $K \subset \C$, including the two points $p,q \in C(K)$, as in the introduction. Then the map $\kappa$ becomes an isomorphism of $K$-vector spaces, as we now explain.

Denote the node of $X_0$ by $x$. 
Locally in the \'etale topology, there are coordinates $\tilde u,\tilde v$ around $x\in X$, and a coordinate $t$ around $0 \in S$ so that the surface $X$ looks like $\tilde u\tilde v = t$ near $x$. 
Replace $S$ by the formal spectrum of $K[[t]]$ and $X$ by the formal spectrum of $K[[\tilde u,\tilde v]]$ so that the morphism $X \to S$ is described by the map $t \mapsto \tilde u \tilde v$. The functions $\tilde u$ and $\tilde v$ lift to local coordinates $u$ and $v$ on $C$ around $p$ and $q$ respectively, possibly after relabeling. Let $\chi \in \Omega_{S,0}$ be such that $\dd t|_0 = \chi$.  We define $\kappa(\chi)$ to be $\dd u|_p \otimes \dd v|_q$. It is straightforward to check that this map does not depend on the choices for $t, \tilde u, \tilde v$. 

\begin{remark}\label{rem:kodaira_spencer}
  To justify the name of the map~\eqref{eq:canonical_identification} we construct it now by alluding to the original Kodaira--Spencer map
\begin{equation}
  \Omega_{S,0}^\vee \to \Ext^1_{\co_{X_0}}(\Omega_{X_0},\co_{X_0})
\end{equation}
associated to the family $X/S$. This construction will not be needed in this paper.

Consider the local-to-global spectral sequence for $\Ext$'s which, in this case, gives
\begin{equation}\label{eq:local_to_global}
  0 \to \H^1(X_0,\HHom(\Omega_{X_0},\co_{X_0})) \to \Ext^1_{\co_{X_0}}(\Omega_{X_0},\co_{X_0}) \to \Ext^1_{\co_{X_0,x}}(\Omega_{X_0,x},\co_{X_0,x}) \to 0.
\end{equation}
The last term in this expression admits an identification 
\begin{equation}
  \Ext^1_{\co_{X_0,x}}(\Omega_{X_0,x},\co_{X_0,x}) \simeq \Omega_{C,p}^\vee \otimes \Omega_{C,q}^\vee,
\end{equation}
canonical up to ordering $p$ and $q$, as explained in~\cite[\S 11, (3.8)]{GACII}. Since our family $X/S$ gives a simple smoothing of the node, the composition of the Kodaira--Spencer map with the projection in~\eqref{eq:local_to_global} gives an isomorphism
\begin{equation}
  \Omega_{S,0}^\vee \isoto \Omega_{C,p}^\vee \otimes \Omega_{C,q}^\vee.
\end{equation}
The inverse dual of this isomorphism coincides with our identification~\eqref{eq:canonical_identification}. 
\end{remark}

\subsection{The limit mixed Hodge structure}

From here on, without loss of generality we take $S$ to be a small open disk around $0$ in $\C$, and write $S^* = S \setminus \{0\}$ for the associated punctured disk. The structure map for the family $X/S$ is denoted by $\pi \colon X \to S$. 

The map $\pi$ induces a variation of pure Hodge structures $V$ of weight $1$ with the $\Z$-local system having fibers $V_{s,\Z} = \H^1(X_s,\Z)$ and the Hodge filtration determined by $F^1 V_s = \H^0(X_s,\Omega_{X_s})$. We get a limit mixed Hodge structure at $0$ once we fix a coordinate function for $S$ at $0$, see~\cite{Schmid1973, Steenbrink1976}. 

In fact, if the differential of two coordinate functions coincide at $0$ then they determine the same limit mixed Hodge structure. 
We will write $\LMHS_\chi$ for the limit mixed Hodge structure induced by any coordinate function $t \colon S \to \C$ satisfying $\dd t|_0 = \chi$. It is easy to see that $\LMHS_\chi$ is a $(-1)$-twisted biextension. See the arguments below.

\subsection{Period matrix of the limit mixed Hodge structure} \label{sec:period_matrix}

We will now give an explicit description of the period matrix of the  twisted biextension $\LMHS_\chi$. We refer to~\cite[\S1.2]{Carlson2017} for a detailed exposition along similar lines.

Let $\omega_1,\dots,\omega_g$ be a basis of holomorphic $1$-forms on $C$ and $\gamma_1,\dots,\gamma_{2g}$ be loops on $C$ giving a basis for $\H_1(C,\Z)$. Choose the loops so that they stay away from $p,q$. Choose a differential of the third kind $\omega_{g+1}$ on $C$ with poles along $p,q$ and $\res_p \omega_{g+1} = 1$, $\res_q \omega_{g+1} = -1$.  Pick a path $\alpha$ from $q$ to $p$ and a small loop $\beta$ oriented counter-clockwise around $p$. 
The basis $\gamma_i$ and the path $\alpha$ descend to loops whose classes form a basis of the first homology of $X_0$. The loop $\beta$ is homologous to zero on $C$ and on $X_0$. The induced loops on $X_0$, as well as their classes, will be denoted by $\gamma_i(0)$, $\alpha(0)$ and $\beta(0)$.

\begin{figure}[!htb]
  \centering
  \begin{minipage}{0.33\textwidth}
    \centering
    \includegraphics[width=0.9\textwidth]{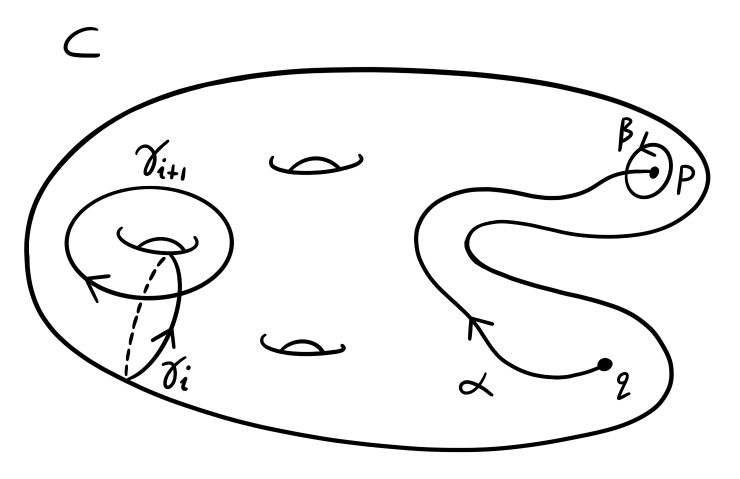}
  \end{minipage}%
  \begin{minipage}{0.33\textwidth}
    \centering
    \includegraphics[width=0.9\textwidth]{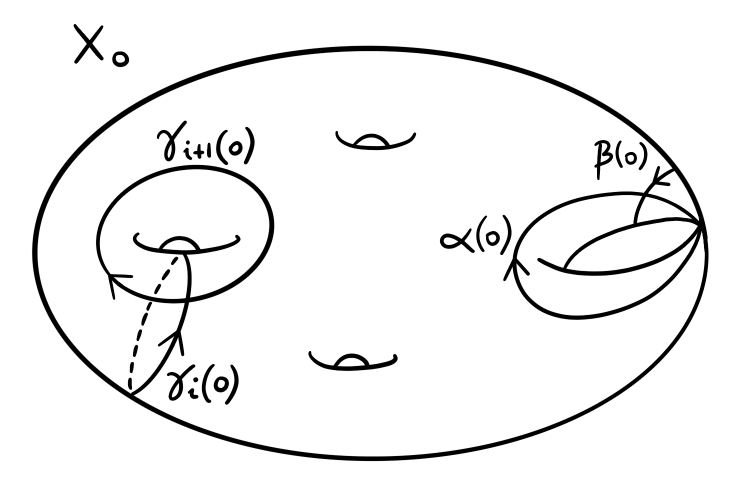}
  \end{minipage}%
  \begin{minipage}{0.33\textwidth}
    \centering
    \includegraphics[width=0.9\textwidth]{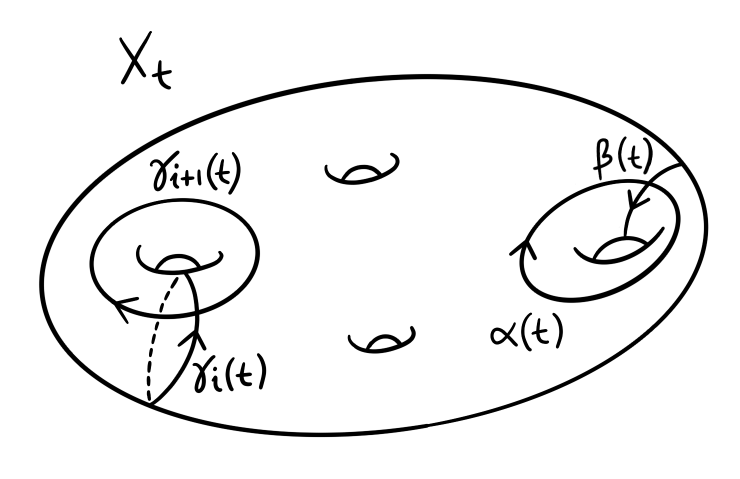}
  \end{minipage}
  \caption{Our designated loops on $C$, $X_0$, and $X_t$.}
\end{figure}

Choose a coordinate function $t$ on $S$ near $0$ with $\dd t|_0 = \chi$. Let $\omega_{X/S}$ be the relative dualizing sheaf of the family $\pi \colon X \to S$. Then $\pi_* \omega_{X/S}$ is a vector bundle of rank $g+1$ over~$S$ with the fibers having the following canonical identifications 
\begin{equation}
  \left. \left(\pi_* \omega_{X/S}\right)\right|_{t} = 
  \begin{cases}
    \H^0(X_t,\Omega_{X_t}) & t \neq 0, \\
    \H^0(C,\Omega_{C}(p+q)) & t = 0.
  \end{cases}
\end{equation}
Pick sections $\mu_1,\dots,\mu_{g+1}$ of $\pi_* \omega_{X/S}$ so that $\mu_i(0) = \omega_i$. In particular, these sections form a basis for $\pi_*\omega_{X/S}$ around $t=0$. 

The integral homology groups $\H_1(X_t,\Z)$ for $t\neq 0$ fit into a local system on $S \setminus \{0\}$. 
The inclusion $X_t \toi X$ gives an exact sequence
\begin{equation}
  0 \to \Z\langle \beta(t) \rangle \to \H_1(X_t,\Z) \to \H_1(X,\Z) \to 0,
\end{equation}
where $\beta(t)$ is the vanishing cycle (unique up to sign). The inclusion $X_0 \toi X$ is a deformation retract, hence $\H_1(X,\Z) \simeq \H_1(X_0,\Z)$. For each $i$ we choose a lift $\gamma_i(t) \in \H_1(X_t,\Z)$ of $\gamma_i(0)$ and a (mildly multivalued) lift $\alpha(t)$ of $\alpha(0)$. We also fix a smooth family of circles representing these loops.  We do the same for $\beta(t)$ so that it specializes to the contractible loop $\beta(0)$, which in turn fixes the orientation for $\beta(t)$.

Let $T \colon \H_1(X_t,\Z) \to \H_1(X_t,\Z)$ be the monodromy operator for going counter-clockwise around $0$. By Picard--Lefschetz theory, $T(\delta) = \delta - \langle \delta,\beta(t) \rangle \beta(t)  $ where $\langle \cdot,\cdot \rangle$ represents the intersection product on homology. By our choice of basis, $T(\gamma_i(t)) = \gamma_i(t)$, $T(\alpha(t)) = \alpha(t) + \beta(t)$, and $T(\beta(t)) = \beta(t)$.

Our choice of orientation on the vanishing cycle $\beta(t)$ and our choice of the residues of $\omega_{g+1}$ imply that
\begin{equation}
  \lim_{t \to 0} \frac{1}{\tpi} \int_{\beta(t)} \mu_{g+1}(t) = \frac{1}{\tpi}\int_\beta \omega_{g+1} = 1.
\end{equation}
Write $\gamma_0(t)=\alpha(t)$ and $\gamma_{2g+1}(t)=\beta(t)$. We will denote the dual cohomology basis by $\gamma^i(t) \in \H^1(X_t,\Z)$ for $i=0,\dots,2g+1$. 

\begin{remark}\label{rem:PL_sign}
Observe that $T(\gamma^i(t)) = \gamma^i(t)$ for all $i < 2g+1$ and $T(\gamma^{2g+1}(t)) = \gamma^{2g+1}(t) - \gamma^0(t)$. Note that the logarithm of monodromy $N=\log T$ evaluates to $T-1$.
\end{remark}

The weight filtration on $\H^1(X_t,\Z)$ is given by~\cite{Schmid1973}
\begin{equation}\label{eq:weight_filtration}
  W_0 = \im (T-1) =  \langle \gamma^0(t)  \rangle,\, W_1 = \ker (T-1) = \langle \gamma^0(t), \dots,\gamma^{2g}(t) \rangle,  \text{ and } W_2 = \H^1(X_t,\Z). 
\end{equation}

As a basis of holomorphic forms on $X_t$ we have
\begin{equation}
F^1 \H^1(X_t,\C) = \H^0(X_t,\Omega^1_{X_t}) = \langle \mu_1(t),\dots,\mu_{g+1}(t) \rangle.
\end{equation}
With respect to these bases, the period matrix of $X_t$ for $t \neq 0$ is equal to
\begin{equation}
  P(t) = \left( \int_{\gamma_j(t)} \mu_i(t) \right)_{\substack{i=1,\dots,g+1 \\ j=0,\dots,2g+1}} \, , 
\end{equation}
see Definition~\ref{def:twisted_biextension}. Let $P_{ij}(t)$ be the $(i,j)$-entry of the matrix $P(t)$ and $e_{ij} \in \C^{(g+1) \times (2g+2)}$ be the matrix with a $1$ at position $(i,j)$ and $0$'s everywhere else. All $P_{ij}(t)$'s are holomorphic on $S$ except for the corner entry $P_{g+1,0}(t)$ which is multi-valued holomorphic at $S \setminus \{0\}$ (single valued if we take a branch cut for $\log(t)$). 

By definition of the limit mixed Hodge structure~\cite{Schmid1973} of the variation $H^1(X_t,\Z)$,  the period matrix of $\LMHS_\chi$ is
\begin{align}
  P_\chi &= \lim_{t \to 0}   P(t) \cdot \exp\left(\frac{\log(t)}{\tpi} N\right) \\
  & = \lim_{t \to 0} \left( P(t) - e_{g+1,0} \frac{\log(t)}{\tpi} \int_{\gamma_{2g+1}(t)} \mu_{g+1}(t) \right)  \\
  &= \lim_{t \to 0} \left( P(t) - e_{g+1,0} \log(t) \right).
\end{align}

We immediately arrive at the following lemma.
\begin{lemma}\label{lem:limit_period_matrix}
  The (biextension) period matrix of $\LMHS_\chi$ (Definition~\ref{def:twisted_biextension}) satisfies 
\begin{equation}\label{eq:limit_period_matrix}
P_\chi=  \left(
  \begin{array}[]{c|ccc|c}
    \int_{\alpha} \omega_1 & \int_{\gamma_1} \omega_1 & \dots  & \int_{\gamma_{2g}} \omega_1 & 0 \\
    \int_{\alpha} \omega_2 & \int_{\gamma_1} \omega_2 & \dots  & \int_{\gamma_{2g}} \omega_2 & 0 \\
    \vdots                 & \vdots                   & \vdots & \vdots                      & \vdots   \\
    \int_{\alpha} \omega_g & \int_{\gamma_1} \omega_g & \dots  & \int_{\gamma_{2g}} \omega_g & 0 \\ \hline
    {\color{red} I_\chi(\alpha,\omega_{g+1})} & \int_{\gamma_1} \omega_{g+1} & \dots  & \int_{\gamma_{2g}} \omega_{g+1} & 2 \pi \ii
  \end{array}
  \right), 
\end{equation}
where 
\begin{eqnarray} \label{eq:Ixi2}
   I_\chi(\alpha,\omega_{g+1}) =  \lim_{t \to 0}  \int_{\gamma_0(t)} \mu_{g+1}(t) - \log(t) \, . 
\end{eqnarray}
\end{lemma}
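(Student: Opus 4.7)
The plan is to evaluate each entry of $P_\chi = \lim_{t\to 0}\bigl(P(t) - e_{g+1,0}\log t\bigr)$, the formula derived in the discussion preceding the statement. Three things must be verified: that only the $(g+1,0)$-entry of $P(t)$ carries a genuine $\log$-monodromy, that the ad hoc subtraction $-\log t$ absorbs this monodromy exactly in the limit, and that each remaining entry specializes to the prescribed period on the normalization $C$.

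I would begin with a Picard--Lefschetz analysis. The monodromy on $\H_1(X_t,\Z)$ sends $\gamma_0(t)$ to $\gamma_0(t)+\gamma_{2g+1}(t)$ and fixes all other basis vectors, so only the $j=0$ column of $P(t)$ is multi-valued, with monodromy shift of $P_{i,0}(t)$ equal to $P_{i,2g+1}(t)=\int_{\beta(t)}\mu_i(t)$. The key local computation is $\int_{\beta(t)}\mu_i(t)=O(t)$ for $i\le g$: in the analytic chart $uv=t$ near the node the vanishing cycle is $|u|=\sqrt{|t|}$, while $\mu_i(t)$ is a section of $\pi_*\omega_{X/S}$ of the form $\phi(u,v,t)\,du/u$ with $\phi(0,0,0)$ equal to the residue of $\omega_i$ at $p$, which vanishes for $i\le g$. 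Applying the residue theorem to $\phi(u,t/u,t)\,du/u$ then gives the estimate. In particular $\frac{\log t}{2\pi\ii}P_{i,2g+1}(t)=O(t\log t)\to 0$, so for $i\le g$ the single-valued combination $P_{i,0}(t)-\frac{\log t}{2\pi\ii}P_{i,2g+1}(t)$ and the unmodified $P_{i,0}(t)$ have the same boundary value.

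Next I would specialize the remaining entries. For $j\in\{1,\dots,2g\}$ the loop $\gamma_j(t)$ stays away from the node and specializes to $\gamma_j\subset C$, while $\mu_i(t)\to\omega_i$ uniformly along it, so $P_{ij}(t)\to\int_{\gamma_j}\omega_i$; the argument works also for $i=g+1$ because the chosen loops avoid $p,q$. For $j=0$ and $i\le g$ the preceding paragraph yields $P_{i,0}(t)\to\int_\alpha\omega_i$. For $j=2g+1$ the local analysis gives $P_{i,2g+1}(t)\to 0$ for $i\le g$, while the residue normalization stated just before the lemma gives $P_{g+1,2g+1}(t)\to 2\pi\ii$.

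For the corner entry, $P_{g+1,2g+1}(t)=2\pi\ii+O(t)$ implies $\frac{\log t}{2\pi\ii}P_{g+1,2g+1}(t)=\log t+O(t\log t)$, so the correct single-valued correction at $(g+1,0)$ agrees in the limit with the ad hoc subtraction of $\log t$; hence
\[ \lim_{t\to 0}\bigl(P_{g+1,0}(t)-\log t\bigr) \;=\; \lim_{t\to 0}\Bigl(\textstyle\int_{\alpha(t)}\mu_{g+1}(t)-\log t\Bigr) \;=\; I_\chi(\alpha,\omega_{g+1}), \]
which is precisely~\eqref{eq:Ixi2}. Assembling all entries produces the matrix~\eqref{eq:limit_period_matrix}. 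The main technical obstacle is the local estimate $\int_{\beta(t)}\mu_i(t)=O(t)$ for $i\le g$, which rests on the normal form of sections of the relative dualizing sheaf near a node; everything else is routine bookkeeping.
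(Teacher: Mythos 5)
Your proposal is correct and takes essentially the same route as the paper: both unpack $P_\chi=\lim_{t\to0}P(t)\exp\bigl(\tfrac{\log t}{2\pi\ii}N\bigr)$, reduce the effect of $N$ to a correction in column $j=0$, and verify termwise convergence. The one place where you add genuine substance is the local estimate $\int_{\beta(t)}\mu_i(t)=O(t)$, which the paper leaves implicit in asserting that the corrections $\frac{\log t}{2\pi\ii}P_{i,2g+1}(t)$ for $i\le g$ vanish in the limit and that the subtraction of $\log t$ agrees with the true single-valued correction $\frac{\log t}{2\pi\ii}P_{g+1,2g+1}(t)$; your residue computation (or, even more simply, the observation that $t\mapsto\int_{\beta(t)}\mu_i(t)$ is holomorphic across $t=0$ and vanishes there) supplies exactly the needed $O(t\log t)\to 0$ bound, so the argument is complete.
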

Note that all integrals in \eqref{eq:limit_period_matrix} are computed on $C$, except for the corner entry $I_\chi(\alpha,\omega_{g+1})$. We will show that this expression too can be computed on $C$. This immediately shows that  the limit mixed Hodge structure is independent of the choice of smoothing of $X_0$, except for the deformation rate of the node.

\begin{remark}
  Our choice of $\Z$-bases for $W_0=\langle \gamma^0(t) \rangle$ and $W_2/W_1 = \langle \gamma^{2g+1}(t) \rangle$ is not canonical. However, the four possible choices of bases induce the same biextension height up to sign. We fixed this sign by insisting for $N=\log T$ to map the basis of $W_2/W_1$ to \emph{minus} the basis of $W_0$. This is equivalent to taking $\gamma^0 \cdot \gamma^{2g+1} = 1$ so that the minus sign is compatible with the Picard--Lefschetz formula.
\end{remark}

\subsection{Proof of Theorem \ref{height_equals_deresonation}} 
We continue with the notation of Section~\ref{sec:period_matrix}.
Theorem~\ref{height_equals_deresonation} follows immediately by combining the three lemmas below. 

\begin{lemma}
  Suppose that $\omega_{g+1}$ has purely imaginary periods. Then 
  \begin{equation} 
    \hgt(\LMHS_\chi) = \Re I_\chi(\alpha,\omega_{g+1}) \, . 
  \end{equation}
\end{lemma}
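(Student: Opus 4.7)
The plan is to combine Lemma~\ref{lem:limit_period_matrix} with the height formula of Theorem~\ref{thm:heights_agree} and exploit the purely-imaginary-periods hypothesis to eliminate the bulk of the expression.

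First I would pass from the $(-1)$-twisted biextension $\LMHS_\chi$ to the underlying biextension $B$, using Remark~\ref{rem:twisted}. Since $\LMHS_\chi = B(-1)$, the biextension period matrix is $P_B = (2\pi i)^{-1} P_\chi$, with $P_\chi$ as in~\eqref{eq:limit_period_matrix}. In the notation of~\eqref{eq:PB} this gives
\begin{equation*}
c = \frac{I_\chi(\alpha,\omega_{g+1})}{2\pi i}, \quad a_j = \frac{1}{2\pi i}\int_{\gamma_j}\omega_{g+1}, \quad b_i = \frac{1}{2\pi i}\int_\alpha \omega_i, \quad (P_H)_{ij} = \frac{1}{2\pi i}\int_{\gamma_j}\omega_i,
\end{equation*}
and by Definition~\ref{def:twisted_biextension} together with Theorem~\ref{thm:heights_agree} we have $\hgt(\LMHS_\chi) = \hgt(P_B)$.

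Next I would apply the hypothesis that $\omega_{g+1}$ has purely imaginary periods. This says $\int_{\gamma_j}\omega_{g+1} \in i\R$ for each $j = 1,\dots,2g$, so dividing by $2\pi i$ shows that the row vector $a$ is real, i.e.\ $\Im a = 0$. Consequently the entire middle term of
\begin{equation*}
\hgt(P_B) = -2\pi\left(\Im c - \Im a \cdot \begin{pmatrix}\Im P_H\\ \Re P_H\end{pmatrix}^{-1}\cdot\begin{pmatrix}\Im b\\ \Re b\end{pmatrix}\right)
\end{equation*}
drops out, and the formula collapses to $\hgt(P_B) = -2\pi\,\Im c$.

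Finally I would compute $\Im c$ directly. Writing $I_\chi(\alpha,\omega_{g+1}) = x + iy$ with $x,y \in \R$, we have $c = (x+iy)/(2\pi i) = y/(2\pi) - ix/(2\pi)$, so $\Im c = -x/(2\pi) = -\Re I_\chi(\alpha,\omega_{g+1})/(2\pi)$. Substituting gives $\hgt(\LMHS_\chi) = \Re I_\chi(\alpha,\omega_{g+1})$, as required.

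There is no real obstacle here: the statement is essentially a direct specialization of Theorem~\ref{thm:heights_agree}. The only point requiring care is the bookkeeping of the Tate twist from Remark~\ref{rem:twisted}, i.e.\ getting the factor $2\pi i$ in the right place so that the corner entry of $P_B$ is~$1$ rather than $2\pi i$; once this is right, the purely-imaginary-periods assumption makes the rest automatic.
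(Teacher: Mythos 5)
Your proof is correct and follows the paper's own argument essentially verbatim: both pass from $\LMHS_\chi$ to $B=\LMHS_\chi(1)$ via Remark~\ref{rem:twisted}, apply Theorem~\ref{thm:heights_agree} with the period matrix from Lemma~\ref{lem:limit_period_matrix}, observe that the purely-imaginary-periods hypothesis gives $\Im a=0$, and then reduce $-2\pi\Im c$ to $\Re I_\chi(\alpha,\omega_{g+1})$. The only difference is that you spell out the final arithmetic a bit more explicitly than the paper does.
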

\begin{proof}
  The period matrix $P_\chi$ of $\LMHS_\chi$ is given in~\eqref{eq:limit_period_matrix}. The biextension $B \coleq \LMHS_\chi(1)$ has period matrix $P_B \coleq \tpip^{-1} P_\chi$. See Remark~\ref{rem:twisted}. Now using the notation in~\eqref{eq:PB} for $P_B$, our assumption on $\omega_{g+1}$ implies that $\Im a=0$. Therefore, the height of $B$, and therefore of $\LMHS_\chi$, is given by
  \begin{equation}
    - 2\pi \Im \frac{I_\chi(\alpha,\omega_{g+1})}{\tpi} = \Re I_\chi(\alpha,\omega_{g+1})
  \end{equation}
  via Definition~\ref{def:hgt_of_matrix}, Theorem~\ref{thm:heights_agree} and Definition~\ref{def:twisted_biextension}.
\end{proof}

As above, we choose coordinates $u$ and $v$ around $p$ and $q$ such that $\dd u|_p \otimes \dd v|_q = \kappa(\chi)$.

\begin{lemma}\label{lem:extrinsic_to_intrinsic}
  If $\omega_{g+1}$ has purely imaginary periods and $r$ is a positive real variable, then
  \begin{equation}
\Re I_\chi(\alpha,\omega_{g+1}) = 
    \lim_{r \to 0^+} \Re \left( \int^{u\inv(\sqrt{r})}_{v\inv(\sqrt{r})} \omega_{g+1} \right) - \log(r).
  \end{equation}
\end{lemma}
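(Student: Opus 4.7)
The plan is to reduce both sides to a common explicit expression by working in local coordinates near the node.  Choose local coordinates $(u,v)$ on $X$ near the node with $uv = t$, compatible with the coordinates $u$ at $p$ and $v$ at $q$ on $C$, and fix a small $\varepsilon > 0$ defining a ``neck'' region $\{|u|, |v| \le \varepsilon\}$ on $X_t$.  Take $t = r > 0$ real throughout; outside the neck, $X_t$ is identified in the limit $t \to 0$ with $C$ minus small disks around $p$ and $q$.  Decompose the loop $\alpha(t)$ on $X_t$ into a ``neck'' piece and a ``middle'' piece; the middle piece, as $t \to 0$, projects to a fixed path on $C$ joining $v\inv(\varepsilon)$ and $u\inv(\varepsilon)$ through the middle of $C$, and the corresponding integral of $\mu_{g+1}(t)$ converges to the integral of $\omega_{g+1}$ along this path.

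For the neck, write $\mu_{g+1}(t) = f(u, v, t)\,\frac{du}{u}$ with $f$ holomorphic and $f(0,0,0) = 1$, and decompose (on $X_t$) $f - 1 = a(u) + b(v) + uv\cdot h(u,v,t)$ with $a(u) = f(u,0,0)-1$ and $b(v) = f(0,v,0)-1$.  Writing $a(u) = u\tilde a(u)$, $b(v) = v\tilde b(v)$ and parametrising the neck by $u$ running between $t/\varepsilon$ and $\varepsilon$ (with $v = t/u$), the main term $du/u$ integrates to $\pm(2\log\varepsilon - \log t)$, producing the logarithmic divergence that is cancelled by $-\log t$; the $a$- and $b$-pieces converge in the limit $t \to 0$ to $\int_0^\varepsilon \tilde a\, du$ and $\int_0^\varepsilon \tilde b\, dv$ (using the substitution $v = t/u$ for the latter); and the remainder is $O(t|\log t|) \to 0$.

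On the RHS, split the path on $C$ from $v\inv(\sqrt r)$ to $u\inv(\sqrt r)$ at $v\inv(\varepsilon)$ and $u\inv(\varepsilon)$.  Using the local decompositions $\omega_{g+1} = du/u + \tilde a(u)\, du$ near $p$ and $\omega_{g+1} = -dv/v - \tilde b(v)\, dv$ near $q$ (with the same $\tilde a$, $\tilde b$ as above, since $\mu_{g+1}(0)|_C = \omega_{g+1}$), each near-pole arc contributes $\tfrac12 \log r - \log\varepsilon$ plus a holomorphic integral converging to $\pm\int_0^\varepsilon \tilde a\, du$ or $\pm\int_0^\varepsilon \tilde b\, dv$; the middle arc contributes an integral of $\omega_{g+1}$ on $C$ matching the LHS middle.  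Summing, subtracting $\log r$, and taking real parts produces the same expression as $\Re I_\chi$; the fact that only real parts are claimed absorbs any imaginary artefacts of the regularization, which is essential because $\omega_{g+1}$ has purely imaginary periods and the RHS path is only fixed up to cycles.

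The main obstacle is sign bookkeeping: the orientation of $\alpha$ through the neck, the Picard--Lefschetz sign, the identification of $p$-side vs.\ $q$-side, and the residue conventions all combine, and a single sign error would propagate through both sides.  The key consistency check is that the holomorphic contributions from the neck integral on the LHS and from the near-pole integrals on the RHS carry matching signs after all identifications, and that the $\pm 2\log\varepsilon$ contributions align so as to cancel the explicit $\varepsilon$-dependence in the final real equality.
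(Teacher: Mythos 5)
Your argument is correct and follows the same architecture as the paper's own proof: fix a neck region $D_\varepsilon$ around the node, split $\alpha(t)$ into a neck piece and a complementary piece, deal with the complementary piece by continuity of $\mu_{g+1}(t)$ away from the node, and isolate the $\log r$ divergence from the model form $du/u$ on the neck. The one genuine difference is implementation: where you Taylor-expand $\mu_{g+1}(t) = f(u,v,t)\,du/u$ and separately track the $\tilde a$, $\tilde b$, and $uv\cdot h$ pieces, the paper writes $\nu(r) = \operatorname{res}_{X_r^\varepsilon}\frac{du\,dv}{uv-r} = du/u = -dv/v$ on the fiber and notes that $\mu_{g+1}(r) - \nu(r)$ is holomorphic on $X_r^\varepsilon$ for all $r \ge 0$; continuity then handles the entire holomorphic remainder in one step, so the only explicit computation needed is $\int_{\psi_\varepsilon(r)}\nu(r)$ versus $\int_{\psi_{\sqrt r,\varepsilon}(0)}\nu(0)$, both of which equal $\log r - 2\log\varepsilon$. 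Your decomposition $f - 1 = a(u) + b(v) + uv\cdot h$ is valid on the family (since the remainder vanishes on $\{uv=0\}$ and hence lies in $(u)\cap(v)=(uv)$), and your sign-tracking of the $2\log\varepsilon$ cancellations lands in the same place, so the proposal is sound; the paper's subtraction of a reference residue form is simply tidier and avoids the coordinate-level bookkeeping you anticipate as the main hazard.
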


\begin{proof} 
  We use the formula~\eqref{eq:Ixi2} to write
  \begin{equation}\label{eq:total_limit}
    I_\chi(\alpha,\omega_{g+1}) = \lim_{r \to 0^+} \int_{\gamma_0(r)} \mu_{g+1}(r) - \log(r).
  \end{equation}
  We will now break the right hand side of~\eqref{eq:total_limit} into three pieces and study them separately. The integration domain will be broken into two pieces: one piece in a neighbourhood of the node and the other in the complement of this neighbourhood. Near the node, we will express the integrand as a sum of an explicit divergent part and a holomorphic part. The resulting three terms appear in the left hand sides of~\eqref{eq:far_from_node}, \eqref{eq:near_node_holo}, \eqref{eq:near_node_divergent}.

  Let $W \subset X$ be an open neighbourhood of the node $x$ with coordinate functions $u,v$ extending the coordinates $u,v$ on the two branches of $X_0$. Fix a small $\varepsilon >0$, let $D_\varepsilon \subset W$ be the rectangular region $\{w \in W \mid \max(\abs{ u(w)},\abs{ v(w)}) < \varepsilon\}$. Analogously, we define $D_{\sqrt{r}}$.

  Take a smoothly deforming family of paths $\psi(r)$ representing the homology class $\gamma_0(r)$.
  Furthermore, we may assume that $\psi_\varepsilon(r) \coleq \psi(r) \cap D_\varepsilon$ is the real line segment with coordinates $ u  v = r$ in $\R^2 \cap D_\varepsilon$. Note that $\psi_\varepsilon(0)$ becomes the union of the two real axes in $D_\varepsilon$.  Taking $r \ll \varepsilon$, the orientation of $\psi_\varepsilon(r)$ is so that it points from $(\varepsilon,r/\varepsilon)$ to $(r/\varepsilon,\varepsilon)$.

  \begin{figure}[!h]
    \centering
    \includegraphics[width=0.5\textwidth]{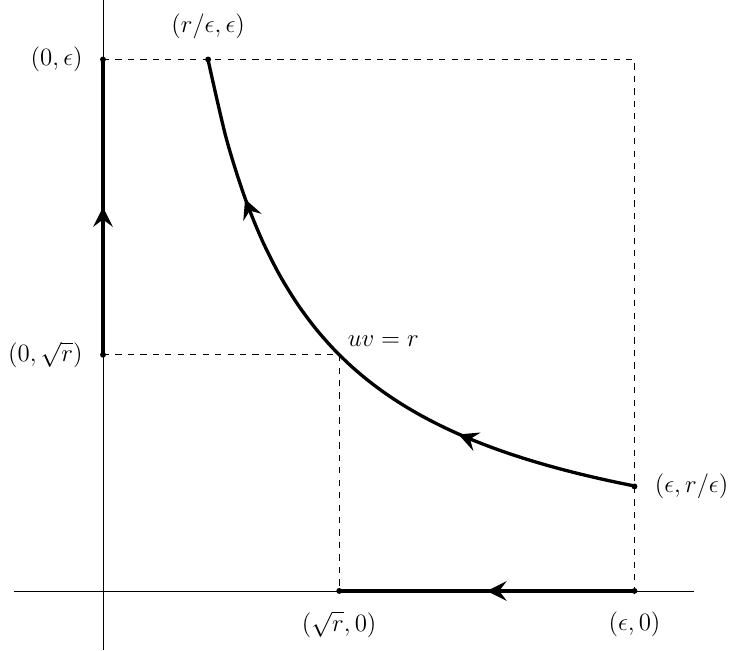}
    \caption{A depiction of the paths $\psi_\varepsilon(r)$ and $\psi_{\sqrt{r},\varepsilon}(0)$.}
  \end{figure}

  With $D_\varepsilon^c$ the complement of $D_\varepsilon$ in $X$, let $\psi_\varepsilon^c(r) \coleq \psi(r) \cap D_\varepsilon^c$. Since $\mu_{g+1}(r)$ is holomorphic on the complement $D_\varepsilon^c$, and $\psi(r)$ varies smoothly, the following equality is immediate
  \begin{equation}\label{eq:far_from_node}
    \lim_{r \to 0^+} \int_{\psi_\varepsilon^c(r)} \mu_{g+1}(r) =  \int_{\psi_\varepsilon^c(0)} \mu_{g+1}(0).
  \end{equation}

  For $r > 0$ and on $X_r^\varepsilon \coleq X_r \cap D_\varepsilon \subset W$ we can construct the following Poincar\'e--Griffiths residue 
  \begin{equation}
    \nu(r) = \res_{X_r^\varepsilon} \frac{\dd u \dd v}{uv - r}.
  \end{equation}
  We can write this form alternatively as
  \begin{equation}\label{eq:mur2}
    \nu(r) = \left.\frac{\dd u}{u}\right|_{X_r^\varepsilon} = - \left.\frac{\dd v}{v}\right|_{X_r^\varepsilon}.
  \end{equation}
  The limit of $\nu(r)$ on $X_0^\varepsilon$ will be defined by the formula~\eqref{eq:mur2}, but using the first expression $\dd u / u$ when $v=0$ and the second expression $-\dd v /v$ when $u=0$.

  Observe that the difference $\mu_{g+1}(r) - \nu(r)$ is holomorphic on $X_r^\varepsilon$ for all $r \ge 0$. 
This gives
\begin{equation}\label{eq:near_node_holo}
    \lim_{r\to 0^+} \int_{\psi_\varepsilon(r)} \mu_{g+1}(r) - \nu(r) = 
    \int_{\psi_\varepsilon(0)} \mu_{g+1}(0) - \nu(0). 
  \end{equation}
Let $\psi_{\sqrt{r},\varepsilon}(0) = \psi(0) \cap D_{\sqrt{r}}^c \cap D_\varepsilon$. Then, the last integral clearly satisfies 
\begin{equation}
  \int_{\psi_\varepsilon(0)} \mu_{g+1}(0) - \nu(0) = \lim_{r \to 0^+} \int_{\psi_{\sqrt{r},\varepsilon}(0)} \mu_{g+1}(0) - \nu(0). 
\end{equation}

  To conclude the proof, it remains to show that the following two limits agree
  \begin{equation}\label{eq:near_node_divergent}
    \lim_{r \to 0^+} \int_{\psi_\varepsilon(r)} \nu(r) - \log(r) = \lim_{r \to 0^+} \int_{\psi_{\sqrt{r},\varepsilon}(0)} \nu(0) - \log(r).
  \end{equation}

  Recall $\psi_\varepsilon(r)$ begins at $(\varepsilon,r/\varepsilon)$ and ends at $(r/\varepsilon,\varepsilon)$ so
  \begin{equation}
    \int_{\psi_\varepsilon(r)} \nu(r) = \int_{\psi_\varepsilon(r)} \frac{\dd u}{u} = \log(r) - 2 \log(\varepsilon).
  \end{equation}

  The path $\psi_{\sqrt{r},\varepsilon}(0)$ has two components along the two axes and this gives
  \begin{equation}
    \int_{\psi_{\sqrt{r},\varepsilon}(0)} \nu(0) = \int^{\sqrt{r}}_\varepsilon \frac{\dd u}{u} + \int^{\varepsilon}_{\sqrt{r}} - \frac{\dd v}{v} = \log(r) - 2 \log(\varepsilon). \qedhere
  \end{equation}
\end{proof}

\begin{remark}
  We note here the similarity of the previous proof with that of~\cite[III.B.11]{GGK2008}. The referenced result may be interpreted as a special case of our main theorem in the case of genus $0$ curves. 
\end{remark}

The proof of Theorem~\ref{height_equals_deresonation} is completed once we show the path independence of the limit appearing in Lemma~\ref{lem:extrinsic_to_intrinsic}. 

\begin{lemma}\label{lem:well_defined} 
  Suppose $\omega_{g+1}$ has purely imaginary periods. Then the following limit  
  \begin{equation}\label{eq:well_defined}
    \lim_{p',q' \to p,q} \Re \left( \int_{q'}^{p'} \omega_{g+1} \right) - \log(\abs{u(p')v(q')}).
  \end{equation}
  is independent of the way the points $p',q'$ approach $p,q$ in~$C\setminus \{p,q\}$.
\end{lemma}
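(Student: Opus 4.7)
The plan is to split the statement into two halves. First, I would verify that for each fixed pair $(p', q')$ the quantity $\Re \int_{q'}^{p'} \omega_{g+1}$ is already independent of the chosen path of integration in $C \setminus \{p, q\}$. Second, I would show that the renormalized expression in \eqref{eq:well_defined} has a limit depending only on $p, q$, not on how $(p', q')$ approaches $(p, q)$.

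For path independence, two paths from $q'$ to $p'$ in $C \setminus \{p, q\}$ differ by a cycle in $H_1(C \setminus \{p,q\}, \Z)$, which is generated by the pullback of $H_1(C, \Z)$ together with small loops $\beta_p$, $\beta_q$ around $p$ and $q$. By assumption, the periods of $\omega_{g+1}$ over the image of $H_1(C, \Z)$ are purely imaginary; by the residue normalization, $\oint_{\beta_p} \omega_{g+1} = 2\pi \ii$ and $\oint_{\beta_q} \omega_{g+1} = -2\pi \ii$ are also purely imaginary. Hence changing the path perturbs $\int_{q'}^{p'} \omega_{g+1}$ only by purely imaginary quantities, leaving $\Re$ invariant.

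For the existence of the limit I would localize near the two punctures. Shrink $U_p, U_q$ to simply connected disks in which $u, v$ provide coordinates centered at $p, q$, and write
\begin{equation*}
\omega_{g+1}|_{U_p} = \frac{\dd u}{u} + \omega_p, \qquad \omega_{g+1}|_{U_q} = -\frac{\dd v}{v} + \omega_q,
\end{equation*}
with $\omega_p, \omega_q$ holomorphic (the coefficients $\pm 1$ come from the residue normalization). Fix base points $p_0 \in U_p \setminus \{p\}$, $q_0 \in U_q \setminus \{q\}$ and a path $\gamma_0$ from $q_0$ to $p_0$ avoiding $p, q$. For $p', q'$ sufficiently close to $p, q$, decompose
\begin{equation*}
\int_{q'}^{p'} \omega_{g+1} = \int_{q'}^{q_0} \omega_{g+1} + \int_{\gamma_0} \omega_{g+1} + \int_{p_0}^{p'} \omega_{g+1}.
\end{equation*}
Substituting the local expansions and taking real parts yields
\begin{equation*}
\Re \int_{q'}^{p'} \omega_{g+1} - \log|u(p') v(q')| = C_0 + \Re \int_{q'}^{q_0} \omega_q + \Re \int_{p_0}^{p'} \omega_p,
\end{equation*}
where $C_0 = -\log|u(p_0) v(q_0)| + \Re \int_{\gamma_0} \omega_{g+1}$ is a constant independent of $(p', q')$. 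Since $\omega_p$ is holomorphic on $U_p$ and $\omega_q$ on $U_q$, the two remaining integrals are path-independent in their respective disks and converge to $\Re \int_{p_0}^{p} \omega_p$ and $\Re \int_{q}^{q_0} \omega_q$ regardless of how $p' \to p, q' \to q$.

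The only delicate point is tracking the orientations of the local loops and the branches of $\log u, \log v$ so that the logarithmic divergences appearing in $\int_{q'}^{q_0} (-\dd v / v)$ and $\int_{p_0}^{p'} (\dd u / u)$ cancel cleanly with the subtracted $\log|u(p') v(q')|$ and contribute only to the imaginary part elsewhere; once the signs are set by the residue convention, the remainder of the argument is routine.
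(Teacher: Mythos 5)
Your proof is correct, and it is a worked-out version of what the paper dismisses as ``straightforward'' without giving any details. The key points are all there: path-independence of $\Re\int_{q'}^{p'}\omega_{g+1}$ follows from the purely imaginary periods together with the residues at $p,q$ being $\pm 1$ (so the loops $\beta_p,\beta_q$ contribute $\pm 2\pi\ii$); and the local decomposition $\omega_{g+1}|_{U_p}=\frac{\dd u}{u}+\omega_p$, $\omega_{g+1}|_{U_q}=-\frac{\dd v}{v}+\omega_q$ with $\omega_p,\omega_q$ holomorphic shows that the logarithmic divergences of the integral exactly cancel the subtracted $\log|u(p')v(q')|$, leaving a constant plus two real-analytic functions of $p',q'$ that converge to well-defined limits as $p'\to p$, $q'\to q$, independently of the path of approach. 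The paper itself takes a different route: it appeals forward to Proposition~\ref{prop:regularization} (the existence and well-definedness of the regularized local N\'eron pairing from Section~\ref{sec:regularized_local_pairing}), using \eqref{eq:local_pairing_at_infty} and \eqref{eq:pq_limit} to identify the limit in \eqref{eq:well_defined} with the Archimedean regularized pairing $\langle p-q,p-q\rangle_{\kappa(\chi),\nu}$. Your argument is more self-contained and elementary, at the cost of a hands-on local computation; the paper's route is shorter once the regularization machinery is available, and has the conceptual advantage of identifying the limit with an invariant that is already known to exist and satisfy a local-to-global formula. Only the final paragraph of your write-up is vaguer than it needs to be --- the sign and branch bookkeeping you defer there is already implicitly handled by the preceding displayed identity, so you could just drop that caveat.
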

\begin{proof}
  This is straightforward. But we can also appeal to equations~\eqref{eq:local_pairing_at_infty} and~\eqref{eq:pq_limit} below which imply the convergence of this integral through Proposition~\ref{prop:regularization}.  
\end{proof}

\section{Regularized local N\'eron pairings}\label{sec:regularized_local_pairing}

We begin by recalling the standard local N\'eron pairing for disjoint divisors of degree zero on a curve over a number field. We then give our definition of regularized local pairing for divisors with common support by removing divergent terms from a limit of local intersections, see Definition~\ref{def:limit_local_pairing}. 

Our definition is comparable to the extension of the N\'eron pairing by Gross~\cite[Section~5]{Gross1986}, see Remark~\ref{rem:compare_to_Gross}. We use this comparison to prove a local-to-global formula for our regularized local pairings. The regularized Archimedean pairing could also be compared to the limit biextension heights defined by Brosnan and Pearlstein~\cite{Brosnan2019}, see Remark~\ref{rem:compare_to_BP}.

\subsection{Local N\'eron pairings in the case of disjoint support} \label{subsec:disjoint}

The main reference for this section is~\cite{Gross1986}.
Let $K$ be a number field, and let $C$ be a smooth projective geometrically connected curve over $K$. Let $D,E$ be $K$-divisors of degree $0$ on $C$ with disjoint support. Then the N\'eron pairing of $D$ and $E$ decomposes into local components as follows
\begin{equation} \label{local_decomposition}
  \langle D,E \rangle = \sum_\nu \langle D,E \rangle_\nu \, , 
\end{equation}
where $\nu$ runs through the places of $K$. 

The N\'eron pairing is invariant under linear equivalence and descends to a positive definite bilinear form on $\Jac(C)(K) \otimes_\Z \R$. The associated quadratic form is the (non-normalized) N\'eron--Tate height $\hgt(D)$ of divisor classes associated to \emph{twice} the canonical principal polarization of $\Jac(C)$, see~\cite[Section~4]{Gross1986}. 

\subsubsection{Local fields and absolute values}

For each place $\nu$ of $K$, we denote by $\C_\nu $ the completion of an algebraic closure of $K_\nu$, endowed with the $\nu$-adic topology.  The pairing $\langle D,E \rangle_\nu$ extends to $\C_\nu$-divisors with disjoint support, and this pairing is continuous on both sides provided supports remain disjoint. 

For each place $\nu$ of $K$, we fix an absolute value $|\cdot|_\nu$ on $K_\nu$ by demanding that $|\pi|_\nu^{-1} = \Nm(\xp)$ if $\nu$ is non-Archimedean, given by a maximal ideal $\xp$ of norm $\Nm(\xp)$, and with $\pi$ a generator of the maximal ideal of $\co_{K,\xp}$.  When $\nu$ is a real embedding we set $|\cdot|_\nu$ to be the usual absolute value on $K_\nu=\R$; if $\nu$ is a pair of complex conjugate embeddings we set $|\cdot|_\nu$ to be the square of the usual absolute value on $K_\nu=\C$.  The absolute value $|\cdot|_\nu$ extends uniquely as an absolute value on $\C_\nu$.

\subsubsection{Archimedean pairing}
If $\nu$ is an Archimedean place of $K$, the component $\langle D,E \rangle_\nu$ is defined as follows. By abuse of notation continue to denote by $C,D,E$ their pullback from $K$ to $\C$ via $\nu$. Let $\eta_E$ be the unique differential with simple poles on $C(\C)$, with purely imaginary periods, and with residual divisor equal to $E$, that is,
\begin{equation}
E = \Res(\eta_E) \coleq \sum_{r \in C(\C)} \res_r(\eta_E) \cdot r.
\end{equation}
Let $\gamma_{D}$ be any $1$-chain on $C(\C)$ such that $\partial \gamma_D = D$. Set $\epsilon_\nu = 1$ if $\nu$ is a real embedding, and $\epsilon_\nu=2$ if $\nu$ is a pair of complex conjugate embeddings. Then 
\begin{equation}\label{eq:local_pairing_at_infty}
  \langle D,E \rangle_\nu \coleq \epsilon_\nu \Re \int_{\gamma_D} \eta_E.
\end{equation}

\subsubsection{Non-Archimedean pairing}\label{sec:non-arch_pairing}
If $\nu$ is a non-Archimedean place, that is, a maximal ideal $\xp$ of the ring of integers $\co_K$ of $K$, the component $\langle D,E \rangle_\xp$ is defined as follows.
Let $\cc_\xp$ be a proper regular model of $C$ over $\co_{K,\xp}$ and let $\cd,\ce$ be $\Q$-divisors on $\cc_\xp$ extending $D,E$ so that one of $\cd$, $\ce$  restricts to a divisor of degree $0$ on every component of $\cc_\xp|_{\overline{\F}_\xp}$.  By \cite[Theorem~9.1.23]{Liu2006},  the intersection pairing on the fiber $\cc_\xp|_{\overline{\F}_\xp}$ is negative semi-definite, with radical spanned by the fiber itself. From this it follows that such  $\Q$-divisors $\cd, \ce$ exist. Then
\begin{equation} \label{def:local_non_arch_pairing}
  \langle D,E \rangle_\xp \coleq -\iota_\xp(\cd,\ce) \log \Nm(\xp) \, , 
\end{equation}
where $\iota_\xp$ refers to the intersection multiplicity on the regular model $\cc_\xp$ and where $\Nm(\xp)$ is the norm of the maximal ideal $\xp$.

\subsection{The definition of regularized local pairings}

We note that when $D$ and $E$ have common support on $C$, both the Archimedean and the non-Archimedean local intersection pairings (\ref{eq:local_pairing_at_infty}) and (\ref{def:local_non_arch_pairing}) are divergent. However, their global N\'eron pairing still can be defined, using that the global pairing is invariant under linear equivalence. 

Gross~\cite[Section~5]{Gross1986} proposes fixing tangent vectors at the common support of $D$ and $E$ to extend the above local pairing decomposition to the case with common supports. We follow his lead but using a dual notation.

For any divisor $D$ on $C$ let $\abs{D}$ stand for the support of $D$. Let 
\begin{equation}
  Z=\abs{D} \cap \abs{E} = \{p_1,\dots,p_m\}
\end{equation}
be the reduced common support of $D$ and $E$. We will assume for simplicity that the points in $Z$ are $K$-rational points of $C$. Let $a_i$ be the multiplicity of $p_i$ in~$D$ and $b_i$ the multiplicity of $p_i$ in $E$. 

Pick an element $\xi$ in the image of the following map
\begin{equation}\label{eq:image_xi}
  \bigoplus_{i=1}^m \Omega_{C,p_i} \to \bigotimes_{i=1}^m \Omega_{C,p_i}^{\otimes a_ib_i} : (\alpha_1, \dots, \alpha_m) \mapsto \alpha_1^{\otimes a_1b_1} \otimes \dots \otimes \alpha_m^{\otimes a_mb_m},
\end{equation}
where the tensor product is taken over $K$. 

Choose for each $i$, a rational function $u_i \in K(C)$ vanishing on $p_i$ to order $1$ so that $\xi = \bigotimes_{i=1}^m (\dd u_i|_{p_i})^{\otimes a_ib_i}$. Note that that $u_i$ is a local coordinate around $p_i$ at every place of $K$. We will say that the tuple of coordinates $(u_1,\dots,u_m)$ determines $\xi$.

We write $D' \to D$ to mean the points $p_i$ in $D$ are perturbed to give points $p_i'$ in $D'$ and then made to approach $p_i$ in $C(\C_\nu)$. 
\begin{definition}\label{def:limit_local_pairing} 
The \emph{regularization} $\langle D,E \rangle_{\xi,\nu}$ of the $\nu$-local N\'eron pairing is defined as 
\begin{equation}\label{eq:limit_local_pairing}
    \langle D,E \rangle_{\xi,\nu} = \lim_{D' \to D} \left( \langle D',E \rangle_\nu -  \sum_{i=1}^m a_ib_i\log\abs{u_i(p'_i)}_\nu \right).
  \end{equation}
\end{definition}

We will show in Proposition~\ref{prop:regularization} that the limit exists, is well defined, and is compatible with Gross' definition~\cite[Section~5]{Gross1986}. As a consequence we will see that it satisfies a local-to-global formula computing the global pairing $\langle D, E \rangle$.

\begin{remark}\label{rem:compare_to_BP} 
  Over an Archimedean place, if we correctly arrange the deformation $D' \to D$ to be over a disk and form the family of Hain biextensions associated to each of the pair of disjoint cycles $D'$ and $E$~\cite[Section 3.3]{Hain1990}, then the limit of the biextension heights as defined by Brosnan and Pearlstein~\cite[Definition 70]{Brosnan2019} will equal our regularized pairing in~\eqref{eq:limit_local_pairing}.
\end{remark}

\subsection{Local-to-global formula for regularized local pairings}

Let $f \in K(C)$ be a rational function. Let $(u_1,\dots,u_m)$ be any tuple of coordinates determining $\xi$. 

\begin{definition}
   We say $f$ is \emph{compatible with $(D,E,\xi)$} if $D$ and $E + \divv(f)$ have disjoint support and, for each $i$, the formal local expansion of $f \in \Q[[u_i]]$ around $p_i$ has leading coefficient $1$, and $f$ evaluates to $1$ at every other point in $D$.
\end{definition}

\begin{remark} It is a standard result that an $f$ compatible with $(D,E,\xi)$ always exists.
\end{remark}

\begin{proposition} \label{prop:regularization}
Pick any $f$ compatible with $(D,E,\xi)$. Then the regularized local pairings are well defined and they satisfy the following identity
  \begin{equation}
    \langle D,E \rangle_{\xi,\nu} = \langle D, E + \divv(f) \rangle_{\nu}.
  \end{equation}
\end{proposition}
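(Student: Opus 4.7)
The plan is to use bilinearity of the local N\'eron pairing in its second argument to replace $E$ by the linearly equivalent divisor $E+\divv(f)$, whose support is disjoint from $D$, and then to identify the logarithmically divergent remainder with the correction term appearing in~\eqref{eq:limit_local_pairing}.

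Concretely, for $D'$ sufficiently close to $D$ its support is disjoint from both $E$ and $\divv(f)$: the second disjointness is a consequence of the compatibility requirement that $f$ takes the value $1$ at every point of $|D|\setminus Z$, so that $\divv(f)$ avoids those fixed points, while the moving points $p'_i$ of $D'$ are disjoint from the points $p_i\in |\divv(f)|$. Bilinearity then gives
\[
  \langle D',E\rangle_\nu = \langle D', E+\divv(f)\rangle_\nu - \langle D',\divv(f)\rangle_\nu.
\]
The first step is to verify the classical identity $\langle D',\divv(f)\rangle_\nu = \log|f(D')|_\nu$, uniformly over all places. At Archimedean places this is immediate once one notes that $d\log f$ already has purely imaginary periods, so it \emph{is} the distinguished differential $\eta_{\divv(f)}$, and Stokes's theorem together with $\partial\gamma_{D'}=D'$ yields the claim. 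At non-Archimedean places I would either cite Gross~\cite{Gross1986} or unpack~\eqref{def:local_non_arch_pairing}: the divisor of $f$ on the regular model may have vertical components, but these contribute zero to the intersection with a horizontal divisor once one uses the full (principal) divisor of $f$, which is perpendicular to every fiber.

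Next, I would expand $\log|f(D')|_\nu$ using the compatibility conditions on $f$. Writing $D' = \sum_i a_ip'_i + R$ with $R$ the part of $D$ supported away from $Z$ (unchanged as $p'_i\to p_i$), the value $f|_{|R|}=1$ makes the contribution of $R$ equal to $o(1)$ as $D'\to D$, and in fact identically zero at non-Archimedean places once $D'$ is close enough, by the ultrametric inequality. Near each $p_i$ the local expansion $f=u_i^{-b_i}(1+O(u_i))$ forces $\log|f(p'_i)|_\nu=-b_i\log|u_i(p'_i)|_\nu+o(1)$, so summing gives
\[
  \log|f(D')|_\nu = -\sum_{i=1}^m a_ib_i\log|u_i(p'_i)|_\nu + o(1).
\]
Substituting back into the bilinearity identity and letting $D'\to D$—legitimate because $E+\divv(f)$ has support disjoint from $D$, so $\langle D',E+\divv(f)\rangle_\nu$ is continuous at $D$—simultaneously proves that the limit in Definition~\ref{def:limit_local_pairing} exists and equals $\langle D,E+\divv(f)\rangle_\nu$. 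Independence of the choice of compatible $f$ is then a short separate check: for $f_1,f_2$ both compatible, $f_1/f_2$ has order $0$ and value $1$ at each $p_i$ (by the matching leading-coefficient conditions) and value $1$ at every point of $|R|$, hence $\log|(f_1/f_2)(D)|_\nu=0$.

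The main obstacle I expect is the non-Archimedean identity $\langle D',\divv(f)\rangle_\xp=\log|f(D')|_\xp$: properly extending $\divv(f)$ to the regular model and accounting for vertical components requires some care, and one must fix sign conventions matching~\eqref{def:local_non_arch_pairing} to ensure everything lines up. Once this identity is in place, the rest reduces to bilinearity, continuity of the pairing on the open locus of disjoint supports, and the elementary local computation of $\log|f(p'_i)|_\nu$ from the compatibility condition.
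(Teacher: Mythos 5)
Your proposal is correct and follows essentially the same route as the paper: both arguments hinge on the identity $\langle D',\divv(f)\rangle_\nu = \log\abs{f(D')}_\nu$, the continuity of the local pairing on the disjoint-support locus, and the expansion $\log\abs{f(D')}_\nu \to -\sum_i a_ib_i\log\abs{u_i(p'_i)}_\nu$ coming from the compatibility normalization of $f$. You spell out the intermediate step $\langle D',\divv(f)\rangle_\nu=\log\abs{f(D')}_\nu$ (Archimedean via $\eta_{\divv(f)}=\dd\log f$, non-Archimedean via the principal divisor on the regular model), which the paper treats as standard and absorbs into the single line $\langle D', E+\divv(f)\rangle_\nu = \langle D',E\rangle_\nu + \log\abs{f(D')}_\nu$; your final ``independence of $f$'' check is actually redundant, since the identity already exhibits the limit as equal to a quantity $\langle D,E+\divv(f)\rangle_\nu$ for \emph{every} compatible $f$, and the limit itself is manifestly $f$-free.
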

For the proof we need the following definition.
\begin{definition}
Let $F/K$ be a field extension. When $Z=\sum c_i z_i$ with $c_i \in \Z$  is a divisor on $C$ with $z_i \in C(F)$ and $f \in F(C)$ a function such that $\divv(f)$ and $Z$ have disjoint support we write
\begin{equation} \label{define_function_eval}
 f(Z) \coleq \prod_i f(z_i)^{c_i} \in F^\times \, . 
\end{equation}
\end{definition}

\begin{proof}[Proof of Proposition~\ref{prop:regularization}] 
  Pick any $f$ compatible with $(D,E,\xi)$. Let $\nu$ be a place of $K$.
  The local N\'eron pairings are continuous in both arguments and hence we have 
  \begin{equation}
   \langle D, E + \divv(f) \rangle_{\nu}    = \lim_{D' \to D} \langle D', E + \divv(f) \rangle_{\nu}.
  \end{equation}
  We expand out the term in the limit to get
  \begin{equation}
    \langle D', E + \divv(f) \rangle_{\nu} = \langle D',E \rangle_\nu + \log\abs{f(D')}_\nu
  \end{equation}
  where $f(D')$ is as in~\eqref{define_function_eval}.  Since $f$ is compatible with $(D,E,\xi)$ we have $f = u_i^{-b_i}(1 + O(u_i))$ near $p_i$ and $f$ evaluates to $1$ on other points on $D$. Therefore, in the limit, we have the following equality
  \begin{equation}
    \lim_{D' \to D}  \langle D',E \rangle_\nu + \log\abs{f(D')}_\nu = \lim_{D' \to D} \langle D',E \rangle_\nu - \sum_{i=1}^m a_ib_i \log\abs{u_i(p_i')}_{\nu}. \qedhere
  \end{equation}
\end{proof}

\begin{remark}\label{rem:compare_to_Gross}
 We see from Proposition~\ref{prop:regularization} that if for each $i$ we fix the tangent vector $\partial_{u_i}|_{p_i}$ dual to $\dd u_i |_{p_i}$,  our definition of the regularized local pairing $ \langle D,E \rangle_{\xi,\nu}$ is a special case of  Gross' definition in~\cite[Section~5]{Gross1986} based on the tangent vectors $\partial_{u_i}|_{p_i}$. We have arranged matters so that the correction term $\log\abs{f[D]}_\nu$ introduced in \cite[Section~5]{Gross1986} is zero for each $\nu$. 
\end{remark}

The original local-to-global formula~\eqref{local_decomposition} is defined for a pair of disjoint divisors. We now extend this formula using our regularized local pairings to an arbitrary pair of divisors but with the choice of $\xi$ as in~\eqref{eq:image_xi}.

\begin{proposition}\label{prop:local_to_global}
  With $D,E,\xi$ as above, we have a local-to-global formula
  \begin{equation}
    \langle D,E \rangle = \sum_\nu \langle D,E \rangle_{\xi,\nu}.
  \end{equation}
\end{proposition}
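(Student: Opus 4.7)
The plan is to reduce the claim to the ordinary disjoint-support local-to-global formula \eqref{local_decomposition} via the identity established in Proposition~\ref{prop:regularization}. The key observation is that the \emph{same} auxiliary function $f$ can be used to regularize every place simultaneously, because the definition of ``compatible with $(D,E,\xi)$'' is independent of $\nu$.

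First, I would fix a rational function $f \in K(C)$ compatible with $(D,E,\xi)$, which exists by the standard construction (choose local parameters agreeing with the $u_i$ to the required order and use Riemann--Roch on the curve to assemble them into a single global $f$). Then $D$ and $E + \divv(f)$ have disjoint support, so the classical N\'eron pairing $\langle D, E + \divv(f) \rangle$ is defined and decomposes as
\begin{equation}
\langle D, E + \divv(f) \rangle = \sum_\nu \langle D, E + \divv(f) \rangle_\nu
\end{equation}
by \eqref{local_decomposition}.

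Next, I would apply Proposition~\ref{prop:regularization} place by place: for every $\nu$,
\begin{equation}
\langle D, E \rangle_{\xi,\nu} = \langle D, E + \divv(f) \rangle_\nu.
\end{equation}
Summing over $\nu$ and using the previous display yields $\sum_\nu \langle D, E \rangle_{\xi,\nu} = \langle D, E + \divv(f) \rangle$. Finally, the global N\'eron pairing is invariant under linear equivalence in either argument, so $\langle D, E + \divv(f) \rangle = \langle D, E \rangle$, which closes the argument.

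The only real subtlety, and what I would expect to be the main thing to verify carefully, is that the sum $\sum_\nu \langle D, E + \divv(f) \rangle_\nu$ is indeed finite, i.e.\ that $\langle D, E + \divv(f) \rangle_\nu$ vanishes for almost all $\nu$. This is a standard feature of the N\'eron pairing for disjoint-support divisors over a number field (the non-Archimedean local terms are intersection numbers on a proper regular model, which vanish at all places of good reduction and of disjoint specialization), so the reduction is clean and no new input is needed beyond Proposition~\ref{prop:regularization} and the classical theory recalled in Section~\ref{subsec:disjoint}. There is essentially no obstacle; the whole content of the proposition lies in Proposition~\ref{prop:regularization}, after which the sum over $\nu$ passes through and linear-equivalence invariance finishes the job.
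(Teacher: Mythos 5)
Your argument is correct and follows essentially the same route as the paper: fix a single $f$ compatible with $(D,E,\xi)$, invoke linear-equivalence invariance of the global pairing, apply the disjoint-support local-to-global decomposition, and then convert each local term using Proposition~\ref{prop:regularization}. The paper performs these steps in a slightly different order (linear equivalence first, then the disjoint-support decomposition) but the content is identical; your added remark on the finiteness of the sum is a harmless standard observation.
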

\begin{proof} Pick any $f$ compatible with $(D,E,\xi)$. 
  As the global pairing is invariant under linear equivalence we have
  \begin{equation}
    \langle D,E \rangle = \langle D,E +\divv(f) \rangle.
  \end{equation}
  Using the local-to-global formula (\ref{local_decomposition}) we write
  \begin{equation}
   \langle D,E +\divv(f) \rangle = \sum_\nu \langle D,E+\divv(f) \rangle_{\nu} \, . 
  \end{equation}
  We can now finish by appealing to Proposition~\ref{prop:regularization}.
\end{proof}

\section{Proof of the main theorem} \label{sec:proof_main} 

We start by recalling the notation for our main theorem as stated in the introduction. We have a proper geometrically integral curve $X_0$ defined over a number field $K$ with a single node, a smoothing deformation $\pi \colon X \to S$ of $X_0$ over $K$, and a nonzero cotangent vector $\chi \in \Omega_{S,0}$. For the case where $X_0$ has multiple nodes, see Section~\ref{sec:smooting_multiple_nodes}.

Let $x$ be the node of $X_0$.  Without loss of generality we assume that the tangent directions at $x$ are defined over $K$. 
Let $C$ be the normalization of $X$ and let $p, q \in C(K)$ denote the preimages of $x$ in $C$. 

Let $u,v \in K(C)$ be $K$-rational local coordinates around $p, q$ so that $\dd u|_p \otimes \dd v|_q = \kappa(\chi)$, where $\kappa$ is the small Kodaira--Spencer map~\eqref{eq:canonical_identification}.

Let $\nu$ be any place of $K$. Definition~\ref{def:limit_local_pairing} specializes to give the regularized local pairing 
\begin{equation}  \label{eq:pq_limit}
  \langle p-q,p-q \rangle_{\kappa(\chi),\nu} = \lim_{p',q' \to p,q} \langle p'-q',p-q \rangle_\nu - \log\abs{u(p')v(q')}_\nu 
\end{equation}
for $p', q' $ varying in $C(\C_\nu)$. In particular, the limit exists and is independent of how $p',q'$ approach $p,q$ by Proposition~\ref{prop:regularization}.

We are going to evaluate these regularized local pairings and use the local-to-global formula Proposition~\ref{prop:local_to_global} to conclude the proof.

\subsection{Contribution from the Archimedean places}

When $\sigma \colon K \hookrightarrow \C$ is a complex embedding of $K$, we denote by $L_{\chi,\sigma}$ the limit mixed Hodge structure associated to the cotangent vector $\chi$ obtained after base changing the family $X/S$ to $\C$ along the embedding $\sigma$.

\begin{theorem}\label{thm:regularized_and_biextension_heights} 
  Let $\nu$ be an Archimedean place of the number field $K$. Then we have
\begin{equation}
  \langle p-q,p-q \rangle_{\kappa(\chi),\nu} = \sum_{\sigma \in \nu}\hgt(\LMHS_{\chi,\sigma}) \, .
\end{equation}
Here $\nu$ consists of a single real embedding or is a pair of complex conjugate embeddings. 
\end{theorem}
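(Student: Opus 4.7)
The plan is to unwind both sides and see that, once the regularized pairing is written out using the differential of the third kind, it becomes (modulo the $\epsilon_\nu$ convention) exactly the regularized limit integral of Theorem~\ref{height_equals_deresonation}.

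First, I would fix an Archimedean place $\nu$ of $K$ and, for each embedding $\sigma \in \nu$, let $\eta_\sigma$ denote the unique differential on $C_{\sigma,\C}$ with simple poles, purely imaginary periods, and residual divisor $\sigma(p)-\sigma(q)$. By the very definition of the Archimedean local N\'eron pairing~\eqref{eq:local_pairing_at_infty}, for perturbed points $p',q'$ near $p,q$,
\begin{equation}
\langle p'-q',\,p-q\rangle_\nu \;=\; \epsilon_\nu \sum_{\sigma\in\nu}' \Re \int_{\sigma(q')}^{\sigma(p')} \eta_\sigma,
\end{equation}
where the primed sum just indicates one representative in each complex-conjugate pair (and collapses to a single $\sigma$ when $\nu$ is real, with $\epsilon_\nu=1$). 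Plugging this into formula~\eqref{eq:pq_limit} and using that $|\cdot|_\nu$ equals $|\cdot|$ if $\nu$ is real and $|\cdot|^2$ if $\nu$ is complex gives
\begin{equation}
\langle p-q,p-q\rangle_{\kappa(\chi),\nu} \;=\; \epsilon_\nu \sum_{\sigma\in\nu}' \lim_{p',q'\to p,q}\!\Big(\Re\!\int_{\sigma(q')}^{\sigma(p')}\!\!\eta_\sigma \;-\; \log|\sigma(u(p'))\,\sigma(v(q'))|\Big).
\end{equation}

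Next, I would apply Theorem~\ref{height_equals_deresonation} to each embedding $\sigma$ individually. The point is that $\eta_\sigma$ satisfies exactly the hypotheses on $\omega_{g+1}$ in that theorem (residues $\pm 1$ at $\sigma(p),\sigma(q)$ and purely imaginary periods), and the local coordinates $\sigma(u),\sigma(v)$ at $\sigma(p),\sigma(q)$ satisfy $\dd\sigma(u)|_{\sigma(p)}\otimes\dd\sigma(v)|_{\sigma(q)}=\sigma(\kappa(\chi))=\kappa_\sigma(\chi_\sigma)$, i.e.\ they are admissible coordinates for the Kodaira--Spencer identification on the base-changed family $X_\sigma/S_\sigma$. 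Hence Theorem~\ref{height_equals_deresonation} identifies each term in the primed sum with $\hgt(L_{\chi,\sigma})$.

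Finally, I would reconcile the primed sum with the full sum over $\sigma\in\nu$. For a real place there is nothing to do. For a complex place $\nu=\{\sigma,\bar\sigma\}$, the uniqueness of $\eta_\sigma$ implies $\eta_{\bar\sigma}=\overline{\eta_\sigma}$ under the identification $C_{\bar\sigma,\C}=\overline{C_{\sigma,\C}}$, so the regularized limit integral at $\bar\sigma$ is the complex conjugate of that at $\sigma$ and in particular has the same real part; equivalently, $\hgt(L_{\chi,\sigma})=\hgt(L_{\chi,\bar\sigma})$. Thus the factor $\epsilon_\nu=2$ exactly converts the primed sum into the full sum $\sum_{\sigma\in\nu}\hgt(L_{\chi,\sigma})$, completing the proof.

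The only real subtlety to watch is the bookkeeping of the $\nu$-adic absolute value conventions versus the $\epsilon_\nu$ convention in~\eqref{eq:local_pairing_at_infty}: one must check that the $\log|u(p')v(q')|_\nu$ appearing inside the regularization and the $-\log(|u(p')v(q')|)$ appearing inside the limit of Theorem~\ref{height_equals_deresonation} match up after factoring out $\epsilon_\nu$. This is the only place the proof could go astray, but with the definition $|a|_\nu=|a|^{\epsilon_\nu}$ for the standard absolute value $|\cdot|$ on $\C$, the two normalisations line up cleanly.
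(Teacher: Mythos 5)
Your proof is correct and follows essentially the same route as the paper's (whose proof is simply the one-line instruction ``rewrite \eqref{eq:pq_limit} using \eqref{eq:local_pairing_at_infty}; now the result is a restatement of Theorem~\ref{height_equals_deresonation}''). You have merely written out the bookkeeping the paper leaves implicit: the factor $\epsilon_\nu$ in \eqref{eq:local_pairing_at_infty} cancels against the normalisation $\log|\cdot|_\nu=\epsilon_\nu\log|\cdot|$, and for a complex place the uniqueness of the third-kind differential forces $\eta_{\bar\sigma}=\overline{\eta_\sigma}$, hence $\hgt(L_{\chi,\sigma})=\hgt(L_{\chi,\bar\sigma})$, so $\epsilon_\nu\cdot\hgt(L_{\chi,\sigma})=\sum_{\sigma\in\nu}\hgt(L_{\chi,\sigma})$.
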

\begin{proof}
Rewrite~\eqref{eq:pq_limit} using~\eqref{eq:local_pairing_at_infty}. Now the result is just a restatement of Theorem~\ref{height_equals_deresonation}.
\end{proof}

\subsection{Contribution from the non-Archimedean places}

We now determine an explicit formula for the  limit appearing in~\eqref{eq:pq_limit} in the case that the place $\nu$ is non-Archimedean, i.e., equal to a maximal ideal of the ring of integers $\co_K$ of $K$. 

Let $\cc$ be a proper regular flat model of $C$ over $\co_K$. The regular model $\cc$ gives a (canonical) $\co_K$-sublattice inside $\Omega_{C,p}$ and $\Omega_{C,q}$ via $\Omega_{\cc/\co_K,\overline{p}}$ and $\Omega_{\cc/\co_K,\overline{q}}$. We carry the resulting $\co_K$-sublattice of $\Omega_{C,p} \otimes_K \Omega_{C,q} $ to $\Omega_{S,0}$ via the canonical isomorphism $\kappa \colon \Omega_{S,0} \isoto \co_{C,p} \otimes_K \co_{C,q}$ and define a norm $\|\cdot\|$ on $\Omega_{S,0}$ via
\begin{equation}\label{eq:intro_norm}
\log \norm{\chi} = \sum_\xp \val_\xp(\chi) \log \Nm(\xp) \, .
\end{equation}
Here $\Nm(\xp)$ denotes the norm of the maximal ideal $\xp$. 

We choose a vertical $\Q$-divisor $\Phi$ on $\cc$ so that $\overline{p}-\overline{q}+\Phi$ is of degree zero on every geometric component of every special fiber of $\cc$, see Section~\ref{subsec:disjoint}. 

\begin{theorem}\label{thm:local_nonarch_limit}
  Let $\xp$ be a finite prime  of $K$. Then we have
  \begin{equation} \label{eq:thm:local_nonarch_limit}
    \langle p-q,p-q \rangle_{\kappa(\chi),\xp} = \left(\val_\xp \chi + 2\,\iota_\xp(\overline{p},\overline{q}) - \iota_\xp(\overline{p}-\overline{q},\Phi)\right)\log \Nm(\xp).
  \end{equation}
\end{theorem}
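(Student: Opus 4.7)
I would evaluate the regularised pairing directly from Definition~\ref{def:limit_local_pairing} as a limit of standard non-Archimedean pairings on the regular model $\cc/\co_{K,\xp}$, and compare the result term-by-term with~\eqref{eq:thm:local_nonarch_limit}. By~\eqref{def:local_non_arch_pairing}, with $\cd' = \overline{p'}-\overline{q'}$ and $\ce = \overline{p}-\overline{q}+\Phi$ (which has degree zero on every component of every special fibre by the choice of $\Phi$),
\[
\langle p'-q',\,p-q\rangle_\xp \;=\; -\iota_\xp(\cd',\ce)\,\log\Nm(\xp).
\]
Since $u \in K(C)$ has a simple zero at $p$, on $\cc$ we decompose $\divv_\cc(u) = \overline{p}+D_u$ with $D_u$ not containing $\overline{p}$, and use the elementary identity
\[
\val_\xp(u(p')) \;=\; \iota_\xp(\overline{p'},\,\divv_\cc(u)) \;=\; \iota_\xp(\overline{p'},\overline{p}) + \iota_\xp(\overline{p'},D_u),
\]
valid whenever $\overline{p'}$ is disjoint from $\supp(\divv_\cc(u))$ on the generic fibre, to convert the correction $-\log|u(p')v(q')|_\xp = (\val_\xp(u(p'))+\val_\xp(v(q')))\log\Nm(\xp)$ into intersection numbers on $\cc$. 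Handle $v$ and $\overline{q}$ symmetrically.

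Expanding $\iota_\xp(\cd',\ce)$ in cross-terms and combining with the rewritten correction, the divergent contributions $\iota_\xp(\overline{p'},\overline{p})$ and $\iota_\xp(\overline{q'},\overline{q})$ cancel pairwise (with opposite signs). The remaining intersections are locally constant in sufficiently small $\xp$-adic neighbourhoods of $\overline{p}$ and $\overline{q}$, so the limit exists and yields
\[
\langle p-q,\,p-q\rangle_{\kappa(\chi),\xp} \;=\; \bigl(2\,\iota_\xp(\overline{p},\overline{q}) - \iota_\xp(\overline{p}-\overline{q},\Phi) + L_u + L_v\bigr)\log\Nm(\xp),
\]
where $L_u \coleq \iota_\xp(\overline{p},\divv_\cc(u)-\overline{p})$ and $L_v \coleq \iota_\xp(\overline{q},\divv_\cc(v)-\overline{q})$. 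Comparing with~\eqref{eq:thm:local_nonarch_limit}, it suffices to establish that $L_u + L_v = \val_\xp(\chi)$.

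The crux is the identification $L_u = \val_\xp(\dd u|_p)$ (and analogously $L_v = \val_\xp(\dd v|_q)$). Regularity of $\cc$ makes $\overline{p}\subset\cc$ a regular closed immersion whose conormal sheaf is canonically isomorphic --- via the conormal sequence, using $\Omega_{\overline{p}/\co_K}=0$ --- to $\Omega_{\cc/\co_K}|_{\overline{p}}$. Choose local parameters $(\pi,v_0)$ of $\co_{\cc,\bar p}$ at the specialisation $\bar p$ of $\overline{p}$ with $v_0$ a local generator of $\mathcal{I}_{\overline{p}}$, so that $\dd v_0|_{\overline{p}}$ trivialises $\Omega_{\cc/\co_K,\overline{p}}$ over $\co_{K,\xp}$. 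Factor $u = \pi^a v_0 h$ in $\co_{\cc,\bar p}$, letting $h$ absorb any other horizontal components of $\divv_\cc(u)$ that happen to pass through $\bar p$. A direct local-intersection computation then gives $L_u = a + \val_\xp(h(p))$, and this is exactly $\val_\xp(\dd u|_p)$ because $\dd u|_p = \pi^a h(p)\,\dd v_0|_p$ modulo $v_0$ on the generic fibre. Since $\kappa$ preserves integral structures by construction,
\[
L_u + L_v \;=\; \val_\xp(\dd u|_p) + \val_\xp(\dd v|_q) \;=\; \val_\xp(\kappa(\chi)) \;=\; \val_\xp(\chi),
\]
which completes the proof.

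\textbf{Main obstacle.} The technical heart is the identification $L_u = \val_\xp(\dd u|_p)$. One must track carefully the local structure of $\divv_\cc(u)$ near $\bar p$ --- including horizontal components other than $\overline{p}$ that may specialise to $\bar p$, and the full vertical part of $\divv_\cc(u)$ --- and verify that these combine to yield precisely the valuation of the Kähler differential relative to the lattice $\Omega_{\cc/\co_K,\overline{p}}$. The independence of $L_u$ from the choice of representative $u$ with prescribed leading differential at $p$ is an internal consistency check that follows from $\iota_\xp(\overline{p},\divv_\cc(g)) = \val_\xp(g(p)) = 0$ for any rational function $g$ with $g(p) = 1$.
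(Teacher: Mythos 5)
Your proposal is correct and follows essentially the same route as the paper: expand $\langle p'-q',p-q\rangle_\xp$ into intersection numbers on $\cc\otimes\co_{K,\xp}$ against the extension $\overline{p}-\overline{q}+\Phi$, convert the correction $-\log\abs{u(p')v(q')}_\xp$ into $\iota_\xp(\overline{p}',\divv_\cc(u))+\iota_\xp(\overline{q}',\divv_\cc(v))$, cancel the divergent self-intersections $\iota_\xp(\overline{p}',\overline{p})$ and $\iota_\xp(\overline{q}',\overline{q})$, and identify the residual with $\val_\xp(\dd u|_p)+\val_\xp(\dd v|_q)=\val_\xp\chi$. If anything you are slightly more careful than the printed proof, which writes ``$z=\pi^{-m}u$ has vanishing locus $\overline p$ near $\overline p$'' and thereby suppresses the factor you call $h$ (extra horizontal components of $\divv_\cc(u)$ specialising to the closed point under $\overline p$); your explicit lemma $L_u=\val_\xp(\dd u|_p)$, proved via the local factorisation $u=\pi^{a}v_0h$, handles this cleanly.
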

\begin{proof}
  We recall from~\eqref{eq:pq_limit} that 
  \begin{equation} \label{eq:local_limit}
    \langle p-q,p-q \rangle_{\kappa(\chi),\xp} = \lim_{p',q' \to p,q} \langle p'-q',p-q \rangle_\xp - \log\abs{u(p')v(q')}_\xp.
  \end{equation}

  Prior to taking the limit, fix two points $p',q' \in C(K_\xp)$ close to but distinct from $p,q$. Let $\overline{p}'$ and $\overline{q}'$ be the closures of $p'$ and $q'$ on the proper regular $\co_{K,\xp}$-model $\cc \otimes \co_{K,\xp}$.  Using the definition of the local non-Archimedean N\'eron pairing in (\ref{def:local_non_arch_pairing}), the expression $\langle p'-q',p-q \rangle_\xp$ can be expanded out using the local intersection products:
  \begin{equation}
    \langle p'-q',p-q \rangle_\xp = -\left( \iota_\xp(\overline{p}',\overline{p}) - \iota_\xp(\overline{p}',\overline{q}) - \iota_{\xp}(\overline{q}',\overline{p}) + \iota_{\xp}(\overline{q}',\overline{q}) + \iota_{\xp}(\overline{p}'-\overline{q}',\Phi)  \right) \log \Nm(\xp).
  \end{equation}
  Let $m = \val_\xp (\dd u|_p)$. Let $\pi$ be a uniformizer at $\xp$. Then  $z \coleq \pi^{-m} u$ has vanishing locus $\overline{p}$ in a neighbourhood of $\overline{p}$ on the local model $\cc \otimes \co_{K,\xp}$. We then have
  \begin{equation}
    \iota_\xp(\overline{p}',\overline{p}) = \val_\xp(z(p')) \, , 
  \end{equation}
  and the $\log\abs{u(p')}_\xp$ term in~\eqref{eq:local_limit} evaluates as
  \begin{equation}
    \log\abs{u(p')}_\xp = \log \abs{\pi^m \cdot z(p')}_\xp = -(\val_\xp ( \dd u|_p)  + \val_\xp(z(p'))) \log \Nm(\xp).
  \end{equation}
The divergent terms $ \pm \val_\xp(z(p')) \log \Nm(\xp)$ cancel one another. A similar line of reasoning goes for the point~$q$. The remaining terms are continuous as $p',q' \to p,q$ so we can evaluate them in the limit. We have $\val_\xp ( \dd u|_p) + \val_\xp ( \dd v|_q) =\val_\xp \chi$. This leads to~(\ref{eq:thm:local_nonarch_limit}).
\end{proof}

\subsection{Final steps of the proof}

We can now put together the Archimedean and non-Archimedean contributions and complete the proof of our main theorem.

First of all, by Proposition~\ref{prop:local_to_global}, we have
\begin{equation} \label{eq:I}
 \hgt(p-q) = \sum_\nu \langle p-q, p-q \rangle_{\kappa(\chi),\nu} \, . 
\end{equation}
By Theorem~\ref{thm:regularized_and_biextension_heights}, we have
\begin{equation} \label{eq:II}
\sum_{\nu | \infty} \langle p-q, p-q \rangle_{\kappa(\chi),\nu} = \sum_{\sigma \colon K \hookrightarrow \C}  \hgt(\LMHS_{\chi,\sigma}) = \hgt(\LMHS_\chi) \, . 
\end{equation}
For any two divisors $\cd,\ce$ on $\cc$ with disjoint generic support we define
\begin{equation}\label{eq:finite_intersection}
(\cd, \ce)_{\mathrm{fin}} \coleq \sum_\xp \iota_\xp(\cd,\ce) \log \Nm(\xp),
\end{equation}
where $\iota_\xp$ refers to the total intersection multiplicity over $\xp$.

From Theorem~\ref{thm:local_nonarch_limit} we obtain
\begin{equation} \label{eq:III}
\begin{split}
\sum_{\nu \nmid \infty} \langle p-q, p-q \rangle_{\kappa(\chi),\nu} & = 
\log \norm{\chi} + 2 \,(\overline{p} \cdot \overline{q})_{\mathrm{fin}} - \left( (\overline{p}-\overline{q})\cdot \Phi \right)_{\mathrm{fin}} \, .
 \end{split}
\end{equation}
The main theorem follows upon combining (\ref{eq:I}), (\ref{eq:II}) and (\ref{eq:III}).

\subsection{The case of multiple nodes} \label{sec:smooting_multiple_nodes}

We would like to make the observation that our techniques and results generalize easily to compute N\'eron-Tate heights of divisors of the form $D=\sum_{i=1}^m (p_i - q_i)$ on $C$ where all points appearing in the expression are distinct. Of course, any divisor \emph{class} of degree zero can be represented in this manner but not every divisor is of this form. 

In this more general setting one would consider the $m$-nodal curve $X_0$ obtained from $C$ by gluing each $p_i$ to $q_i$. 
Fix a smoothing deformation $X/S$ of $X_0$. Let $\LMHS_\chi$ be the limit mixed Hodge structure associated to $X/S$ as $t \to 0$ where $\dd t|_0 = \chi$. 

The twist $B=\LMHS_{\chi}(1)$ of the resulting limit mixed Hodge structure can appropriately be called a \emph{rank $m$-biextension}. Indeed, there are canonical finitely generated free abelian groups $M_1, M_2$ of rank $m$ such that the mixed Hodge structure $B$ has weight graded pieces 
\begin{equation} \Gr^W B =  \left(\Z(1) \otimes M_1 \right)  \oplus H \oplus   M_2  \simeq \Z(1)^m \oplus H \oplus \Z^m \end{equation}
where $H=\H^1(C,\Z(1))$ is a pure Hodge structure of weight $-1$ (compare with Definition~\ref{def:biextension}). The obvious period matrix for $B$, using bases respecting the filtrations and using the notation in Definition~\ref{def:period_matrix}, will have entries $a,b,c$ that are matrices and where the corner entry $1$ becomes the $m\times m$ identity matrix. 
The formula for the height of a period matrix as in Definition~\ref{def:hgt_of_matrix} makes sense but returns a real $m\times m$ matrix. The sum of the entries of this height matrix is ``the total height'' of $\LMHS_\chi$.

We claim that the total height of $\LMHS_\chi$ is once again a regularized Archimedean self-intersection. Let us first introduce the small Kodaira--Spencer maps that we will use. For any $i=1,\dots,m$ the Kodaira--Spencer map gives
\begin{equation}
 \kappa_i \colon \Omega_{S,0} \isoto \Omega_{C,p_i}\otimes \Omega_{C,q_i}.
\end{equation}
Define $\kappa(\chi) \coleq \kappa_1(\chi) \otimes \cdots \otimes \kappa_m(\chi)$ so that we have
\begin{equation}
  \kappa \colon \Omega_{S,0} \isoto \bigotimes_{i=1}^m \Omega_{C,p_i}\otimes \Omega_{C,q_i}.
\end{equation}

The total height of $\LMHS_\chi$ coincides with the $\kappa(\chi)$-regularized Archimedean self-intersection of $D$, as in Definition~\ref{def:limit_local_pairing}. The proof of this fact follows easily from Section~\ref{sec:lmhs} as the key analysis is purely local at each node. The  generalization of our main theorem then follows by a straightforward modification of the proof of Theorem~\ref{thm:local_nonarch_limit} for the $\kappa(\chi)$-regularized non-Archimedean self-intersections. 

The ultimate generalization of the main theorem to general divisors $D= \sum_{i} a_i p_i$ with $\sum_i a_i =0$ seems doable in principle. However note that then $X_0$ must be constructed with more complicated singularities and this, in turn, makes the study of the limit mixed Hodge structure more involved.  We will not investigate this issue further here and leave the details to the interested reader.

\end{document}